\theoremstyle{plain}
\newtheorem{theo}{Theorem}[section]
\newtheorem{cor}[theo]{Corollary}
\newtheorem{prop}[theo]{Proposition}
\newtheorem{lem}[theo]{Lemma}
\theoremstyle{remark}
\newcommand{\cd}{{\mathcal D}}
\newcommand{\cp}{{\mathcal P}}
\newcommand{\E}{{\mathbb E}}
\newcommand{\N}{{\mathbb N}}
\renewcommand{\P}{{\mathbb P}}
\newcommand{\xen}[1]{X^{(n)}_{#1}}
\newcommand{\yyen}[1]{Y^{(n)}_{#1}}
\newcommand{\zen}[1]{Z^{(n)}_{#1}}
\newcommand{\uen}[1]{U^{(n)}_{#1}}
\newcommand{\hen}[1]{\hat I^{(n)}_{#1}}
\newcommand{\thtn}[1]{\theta^{(n)}_{#1}}
\newcommand{\etang}[1]{\eta^{(n)}_{#1}}
\newcommand{\thtng}{\theta^{(n)}_\gamma}
\newcommand{\ien}{I^{(n)}}
\newcommand{\een}{E^{(n)}}
\newcommand{\len}{L^{(n)}}
\newcommand{\tien}{\tilde I^{(n)}}
\newcommand{\hien}{\hat I^{(n)}}
\newcommand{\ton}{\tau^{(n)}}
\newcommand{\son}{\sigma^{(n)}}
\newcommand{\etan}{\eta^{(n)}}
\newcommand{\hetan}{\hat\eta^{(n)}}
\newcommand{\zetan}{\zeta^{(n)}}
\newcommand{\men}[1]{M^{(n)}_{#1}}
\begin{document}

\title{Total internal and external lengths of the Bolthausen-Sznitman coalescent}

\date{\today}

 \author{G\"otz Kersting}
  \address{Goethe Universit\"at, Robert Mayer Strasse 10, D-60325 Frankfurt am Main, Germany}
 \email{kersting@math.uni-frankfurt.de}

\author{Juan Carlos Pardo}
\address{CIMAT, A.C., Calle Jalisco s/n,
Col. Mineral de Valenciana,
36240 Guanajuato, Guanajuato, Mexico}
\email{jcpardo@cimat.mx}

\author{Arno Siri-J{\'e}gousse}
\address{CIMAT, A.C., Calle Jalisco s/n,
Col. Mineral de Valenciana,
36240 Guanajuato, Guanajuato, Mexico}
\email{arno@cimat.mx}

\begin{abstract} In this paper, we study a weak law of large numbers for  the total internal length of the Bolthausen-Szmitman coalescent.  As a consequence, we obtain the weak limit law of the centered and rescaled total external length. The latter extends results obtained by Dhersin $\&$ M\"ohle \cite{DM12}. 
An application to population genetics dealing with the total number of mutations in the genealogical tree is also given.
 \end{abstract}

\keywords{Coalescent process, Bolthausen-Szmitman coalescent, external branch, block counting process, recursive construction, Iksanov-M\"ohle coupling}
\subjclass[2010]{60J70, 60J80, 60J25,  60F05,  92D25}

\maketitle

\section{Introduction and main results}

In population genetics, one way to explain disparity is to observe how many genes appear only once in the sample.
A gene carried by a single individual is the result of two possible events:
either the gene comes from a mutation that appeared in an external branch of the genealogical tree, either this gene is of the ancestral type and mutations occured in the rest of the sample
(see Figure \ref{fig:graph1}).
We suppose that events of the second type occur in a much less frequent way than events of the first type (it is indeed the case when the size of the sample goes big). 
 The total number of genes carried by a single individual is then closely related to the so-called total external length, which is the sum of all external branch lengths of the tree.

\begin{figure}[h]
\begin{center}
\includegraphics[scale=0.5]{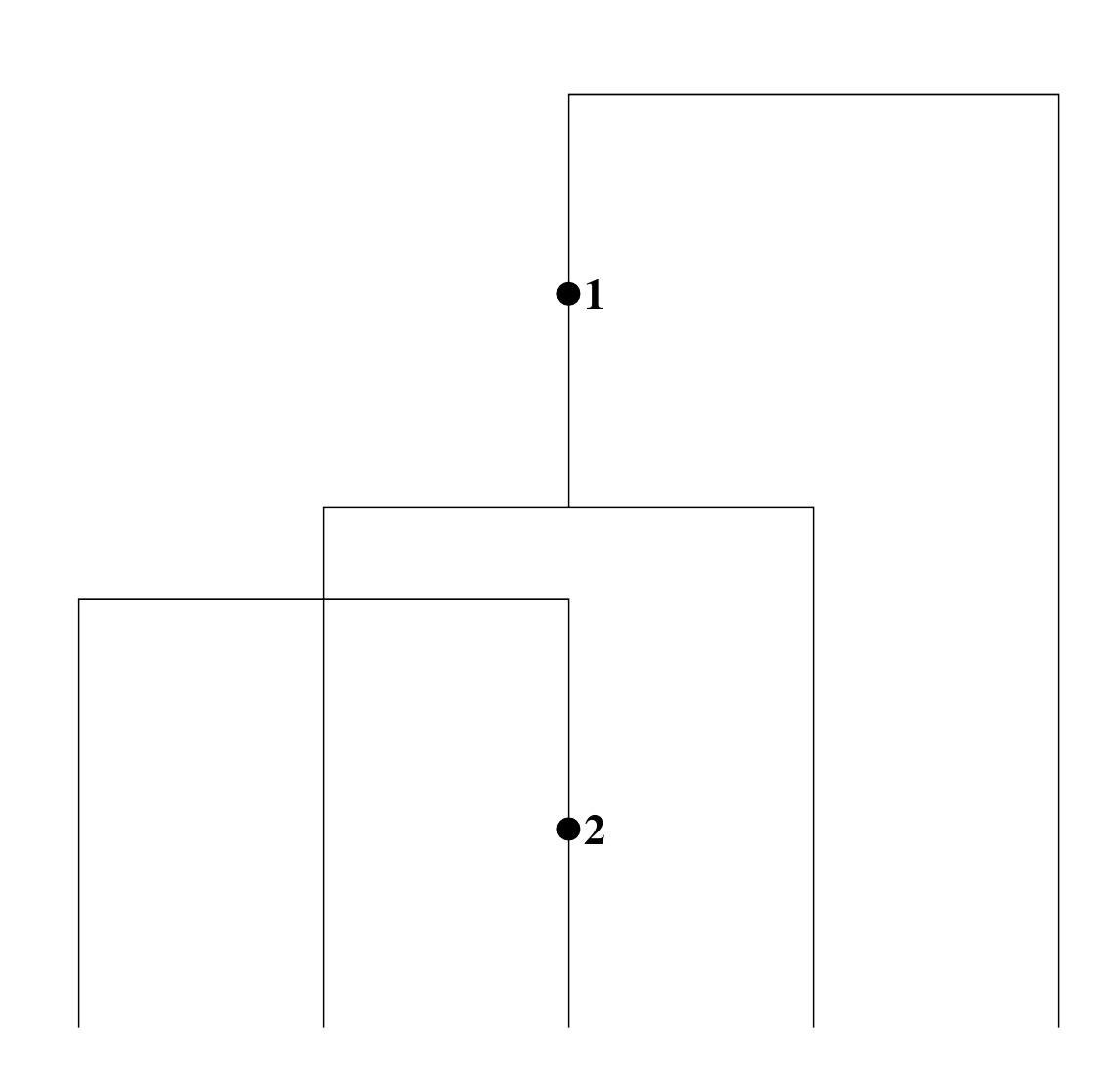}
\caption{In this genealogical tree, two mutations appear. Mutation 1 is in an internal branch and it is shared by 4 individuals.
Mutation 2 is in an external branch so it is carried by 1 individual.
In this example, an ancestral gene is also carried by 1 individual. This situation is  negligible when the size of the sample is large.} \label{fig:graph1}
\end{center}
\end{figure}

 The Bolthausen-Sznitman coalescent (see for instance \cite{BS98}) is a well-known example of exchangeable coalescents with multiple collisions (see \cite{Pit99, Sag99} for a proper definition of this type of coalescents). It was first introduced in physics, in order to study  spin glasses but it has also been thought as a limiting genealogical model for evolving populations with selective killing at each generation, see for instance \cite{BDMM06, BDMM07}. Recently, Berestycki et al. in \cite{BBS10} noted that this coalescent represents the genealogies of the branching brownian motion with absorption.

The Bolthausen-Sznitman coalescent $(\Pi_t,t\geq0)$, is a continuous time Markov chain with values in the set of partitions of $\N$, starting with an infinite number of blocks/individuals. In order to give a formal description of the Bolthausen-Sznitman coalescent, it is sufficient to give its jump rates. Let $n\in \N$, then the restriction $(\Pi_t^{(n)},t\geq0)$ of $(\Pi_t,t\geq0)$ to $[n]=\{1,\dots,n\}$ is a Markov chain with values in $\cp_n$, the set of partitions of $[n]$, with the following dynamics: whenever $\Pi_t^{(n)}$ is a partition consisting of  $b$ blocks, any particular $k$ of them merge into one block at rate 
\[
\lambda_{b,k}=\frac{(k-2)!(b-k)!}{(b-1)!},
\]
so the next coalescence event occurs at rate
\begin{equation}\label{eq:totalrate}
 \lambda_{b}=\sum_{k=2}^b\binom{b}{k}\lambda_{b,k}=b-1.
\end{equation}
Note that mergers of several blocks into a single block is possible, but multiple mergers do not occur simultaneously. Moreover, this 
coalescent process is exchangeable, i.e.  its law does not change under the effect of a random permutation of labels of its blocks.

One of our aims is to study the total external length of the Bolthausen-Sznitman coalescent. More precisely, we  determine the asymptotic behaviour of the total external length $E^{(n)}$ of the Bolthausen-Sznitman coalescent restricted to $\cp_n$, when $n$ goes to infinity, and relate it to its total length $L^{(n)}$ (the sum of lengths of all external and internal branches). A first orientation can be gained from coalescents without proper frequencies. For this class M\"ohle \cite{Moh10} proved that after a suitable scaling the asymptotic distributions of $E^{(n)}$ and $L^{(n)}$ are equal. Now the Bolthausen-Sznitman coalescent does not belong to this class, but it is (loosely speaking) located at the borderline. Also it is known for the Bolthausen-Sznitman coalescent  \cite{DIMR07} that
\begin{equation} \label{eqlen}
\frac{(\log n)^2}{n} L^{(n)}-\log n-\log\log n\xrightarrow[n\to\infty]{d}Z,
\end{equation}
where $\xrightarrow[]{d}$ denotes convergence in distribution and $Z$ is a strictly stable r.v. with index 1, i.e. its  characteristic exponent  satisfies
\[
\Psi(\theta)=-\log\mathbb{E}\Big[e^{i\theta Z}\Big]=\frac{\pi}{2}|\theta|-i\theta\log|\theta|, \qquad \theta \in \mathbb{R}.
\]
In their recent work, Dhersin and M\"ohle
\cite{DM12} showed that the ratio $E^{(n)}/L^{(n)}$ converges to 1 in probability.
Thus one might guess that $E^{(n)}$ satisfies the same asymptotic relation with the same scaling. It is a main result of this paper that such a conjecture is almost, but not completely true.\\


Let us consider $(\Pi^{(n)}_t,t\geq0)$.  We denote by $\uen k$ the size of the $k$-th jump, i.e the number of blocks that the Markov chain loses in $k$-th coalescence event.
We also denote by  $\xen{k}$ for  the number of blocks after $k$ coalescence events. 
Observe that $\xen{0}=n$ and 
$\xen{k}=\xen{k-1}-\uen k=n-\sum_{i=1}^k\uen i$.
Since the merging blocks coalesce into 1, there are $\uen k+1$ blocks involved in $k$-th coalescence event and, for $l< \xen{k-1}$,
$$
\P\Big(\uen k=l\Big|\xen{k-1}=b\Big)=\frac{\displaystyle\binom{b}{l+1}\lambda_{b,l+1}}{\lambda_{b}}=\frac b{b-1}\frac 1{l(l+1)}.
$$
Let $\ton$ be the number of coalescence events. More precisely
$$
\ton=\inf\Big\{k, \xen{k}=1\Big\}.
$$
According to Iksanov and M\"ohle \cite{IM07} (see also \cite{DIMR09}),  $\ton$ satisfies the following asymptotic behaviour
\begin{equation}\label{eq:abton}
\frac{(\log n)^2}{n}\ton-\log n-\log\log n
\xrightarrow[n\to\infty]{d} Z.
\end{equation}

The main result of this paper describes the behaviour of  the total internal length  $\ien$, when $n$ goes to $\infty$. In order to do so, we introduce the r.v. $\yyen k$ that represents the number of internal branches after $k$ coalescence events. In other words, it is 
 the number of remaining blocks which have already participated in a coalescence event. Note that at time 0 all branches are external i.e.  $\yyen 0=0$. Let  $(\mathbf{e}_k,k\geq1)$ be a sequence of i.i.d. standard exponential r.v., also independent from the $\xen k$ and $\yyen k$, thus  from \eqref{eq:totalrate}, we have
$$\ien =\sum_{k=1}^{\ton-1}{\yyen k}\frac{\mathbf{e}_k}{\xen k-1}.
$$
Our main result is the following weak law of large numbers for $\ien$. Here $\xrightarrow[]{\mathbb{P}} $ denotes convergence in probability. 
\begin{theo}\label{th:cvI}
For the total internal length of the Bolthausen-Sznitman coalescent, we have
$$
\frac{(\log n)^2}{n}\ien \xrightarrow[n\to\infty]{\mathbb{P}} 1.
$$
\end{theo}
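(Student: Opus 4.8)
\emph{Overview and Step 1 (removing the $\mathbf e_k$).} The plan is to strip, one layer at a time, the three sources of randomness in $\ien$ — the exponential weights, the fluctuations of $\yyen k$ given the block counting jump chain, and the jump chain itself — and thereby reduce the statement to a sharp renewal asymptotic for a functional of $(\xen k)$. Conditionally on $\sigma(\xen j,\yyen j:j\ge0)$, the difference $\ien-\sum_{k=1}^{\ton-1}\tfrac{\yyen k}{\xen k-1}=\sum_{k=1}^{\ton-1}\tfrac{\yyen k}{\xen k-1}(\mathbf e_k-1)$ is a sum of independent centered terms, with conditional variance $\sum_{k=1}^{\ton-1}\big(\tfrac{\yyen k}{\xen k-1}\big)^2\le 4\ton\le 4n$ (using $\yyen k\le\xen k$ and $\ton\le n-1$); multiplying by $(\log n)^4/n^2$ gives $\tfrac{(\log n)^2}{n}(\ien-A^{(n)})\to0$ in $L^2$, where $A^{(n)}:=\sum_{k=1}^{\ton-1}\tfrac{\yyen k}{\xen k-1}$. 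Since $0\le A^{(n)}-S^{(n)}=\sum_{k=1}^{\ton-1}\tfrac{\yyen k}{\xen k(\xen k-1)}\le\sum_{b=2}^{n}\tfrac1{b-1}\le 1+\log n$, it suffices to show $\tfrac{(\log n)^2}{n}S^{(n)}\xrightarrow{\P}1$ with $S^{(n)}:=\sum_{k=1}^{\ton-1}\tfrac{\yyen k}{\xen k}$.

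\emph{Step 2 (conditioning on the jump chain).} Write $\ven k:=\xen k-\yyen k$ for the number of external branches; given the (strictly decreasing) jump chain $(\xen j)$, $\ven k$ counts the initial individuals whose block was missed by each of the first $k$ mergers, and a direct computation (reducing at each step to ratios of binomial coefficients) gives $\P(i\text{ still singleton after step }k\mid(\xen_\cdot))=\prod_{j=1}^{k}\tfrac{\xen j-1}{\xen{j-1}}=\tfrac{\xen k-1}{n}\Pi_k$, with $\Pi_k:=\prod_{j=1}^{k-1}(1-\tfrac1{\xen j})$, and, for $i\neq i'$,
\[
\P(i,i'\text{ both singletons after }k\mid(\xen_\cdot))=\frac{(\xen k-1)(\xen k-2)}{n(n-1)}\prod_{j=1}^{k-1}\Big(1-\frac2{\xen j}\Big)\ \le\ \P(i\text{ still singleton after }k)^2 ,
\]
the inequality because $(1-\tfrac1x)^2\ge1-\tfrac2x$ for $x\ge2$ and $\xen k\le n$. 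Hence the singleton indicators are pairwise negatively correlated given $(\xen_\cdot)$, so $\mathrm{Var}(\ven k\mid(\xen_\cdot))\le\mathbb E[\ven k\mid(\xen_\cdot)]=(\xen k-1)\Pi_k\le\xen k$, and by Cauchy–Schwarz $\mathrm{Var}(S^{(n)}\mid(\xen_\cdot))\le\big(\sum_{k=1}^{\ton-1}\xen k^{-1/2}\big)^2\le\big(\sum_{b=2}^{n}b^{-1/2}\big)^2\le 4n$, which gives $\tfrac{(\log n)^2}{n}\big(S^{(n)}-\mathbb E[S^{(n)}\mid(\xen_\cdot)]\big)\to0$ in $L^2$. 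Finally $\mathbb E[S^{(n)}\mid(\xen_\cdot)]=\sum_{k=1}^{\ton-1}\big(1-\tfrac{\xen k-1}{\xen k}\Pi_k\big)=\sum_{k=1}^{\ton-1}(1-\Pi_k)+O(\log n)$.

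\emph{Step 3 (telescoping and the renewal asymptotic).} From $\Pi_k-\Pi_{k+1}=\Pi_k/\xen k$ one gets $1-\Pi_k=\sum_{i=1}^{k-1}\Pi_i/\xen i$, hence
\[
\sum_{k=1}^{\ton-1}(1-\Pi_k)=\sum_{i=1}^{\ton-2}\frac{\Pi_i}{\xen i}\,(\ton-1-i),
\]
a nonnegative sum in which each term is $\le\Pi_i\le1$ (since $\ton-1-i\le\xen i-1$). Terms with $\xen i<m_n$, $m_n:=\lfloor(\log n)^3\rfloor$, contribute $\le m_n=o\!\big(n/(\log n)^2\big)$ and are dropped. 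On the rest one uses the Iksanov–M\"ohle recursive construction \cite{IM07,DIMR09}: until the count first falls below $m_n$, $(\xen j)$ is coupled with $n-R_j$ for a renewal walk $R$ of step law $\tfrac1{\ell(\ell+1)}$ (tail $\tfrac1{\ell+1}$, in the domain of attraction of a stable law of index $1$). By the Markov property at the step where the count first equals $b$, there $\ton-1-i$ is a fresh copy of $\ton^{(b)}-1$ with $\mathbb E[\ton^{(b)}]\sim b/\log b$ (by \eqref{eq:abton} at size $b$), $\Pi_i\approx\exp\!\big(-\sum_{j:\xen j>b}1/\xen j\big)\approx b^{1/\log n}/e$, and the number of visited counts near $b$ is governed by the renewal mass $u(n-b)\sim1/\log(n-b)$; the sharp second-order renewal estimates then give
\[
\mathbb E\!\Big[\sum_{i=1}^{\ton-2}\frac{\Pi_i}{\xen i}(\ton-1-i)\Big]\ \sim\ \sum_{b=2}^{n}\frac{u(n-b)\,b^{1/\log n}}{e\,\log b}\ \sim\ \frac{n}{(\log n)^2},
\]
while a companion second-moment bound (again reducing, via Cauchy–Schwarz and the Markov property, to sums $\sum_b u(n-b)\,g(b)$) makes the conditional variance $o\!\big(n^2/(\log n)^4\big)$. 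This yields $\tfrac{(\log n)^2}{n}\mathbb E[S^{(n)}\mid(\xen_\cdot)]\xrightarrow{\P}1$, and combining Steps 1–3 proves the theorem.

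\emph{Where the difficulty lies.} Steps 1–2 and the telescoping are routine; the crux is producing the exact constant $1$ in the last display. Both $\ton$ and the external-length proxy $\sum_k\Pi_k$ are of size $\tfrac n{\log n}+\tfrac{n\log\log n}{(\log n)^2}+O_{\P}\!\big(\tfrac n{(\log n)^2}\big)$ and do not concentrate, yet the quantity we need is their difference, in which the $\log\log$- and stable-type contributions must cancel. The telescoped double sum of Step 3 is precisely the cancellation-free form of that difference, so the order $n/(\log n)^2$ is already visible there; but pinning down the constant still requires the sharp second-order renewal behaviour of the block counting process — the analytic input behind the $\log n+\log\log n$ centering in \eqref{eqlen}–\eqref{eq:abton} — together with a careful estimate of the Iksanov–M\"ohle coupling error near the cutoff $m_n$.
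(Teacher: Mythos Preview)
Your Steps 1 and 2 are correct and closely parallel the paper's Lemmas~2.5 and~2.6; in fact your negative-correlation argument for $\mathrm{Var}(\ven k\mid\xen{})\le\E[\ven k\mid\xen{}]$ followed by Minkowski is a pleasant alternative to the paper's recursive hypergeometric decomposition, and gives the same $O_\P(\sqrt n)$ control. The telescoping identity at the start of Step~3 is also right and is essentially the paper's sum-interchange.

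The gap is in the remainder of Step~3. You reduce to showing
\[
\frac{(\log n)^2}{n}\sum_{i=1}^{\ton-2}\frac{\Pi_i}{\xen i}(\ton-1-i)\xrightarrow[n\to\infty]{\P}1,
\]
but then only sketch a moment computation via ``renewal mass $u(n-b)$'' and a claimed companion second-moment bound. Neither is carried out. The difficulty is real: the factor $\Pi_i$ depends on the \emph{entire path} $(\xen j)_{j<i}$, not just on $\xen i=b$, so your heuristic $\Pi_i\approx e^{-1}b^{1/\log n}$ already presupposes a law of large numbers for $\sum_{j<i}1/\xen j$ --- precisely the kind of uniform pathwise control you have not established. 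Likewise the asserted variance bound $o(n^2/(\log n)^4)$ would require controlling correlations between $(\Pi_i,\xen i,\ton-i)$ and $(\Pi_{i'},\xen{i'},\ton-i')$ across different $i,i'$, which is not at all routine.

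The paper closes this gap differently and, crucially, avoids any second-moment computation for the $\xen{}$-functional. It first proves a \emph{uniform pathwise} estimate (Proposition~2.2): for suitable stopping indices $\thtng$,
\[
\sup_{1\le k\le\thtng}\Big|\frac{\xen k}{\ton-k}-\log n\Big|=o_p(\log n),
\]
obtained via an inductive bootstrap on the Iksanov--M\"ohle coupling (Lemma~2.3). With this in hand, the evaluation of $\hien$ becomes essentially deterministic: on the main range one replaces $(\ton-i)/\xen i$ by $1/\log n$ uniformly, uses a two-term Bonferroni bound (which amounts to replacing $\Pi_i$ by $1$ with an $O(n/(\log n)^3)$ error), and reads off the constant~$1$ directly. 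This is the missing idea in your proposal: a uniform ratio estimate for the jump chain rather than a first/second-moment renewal calculation.
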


Now noting that  $\len=\ien+\een$ and using  \eqref{eqlen} and our main result, we deduce  the asymptotic distribution  of the total external length $\een$.
\begin{cor}\label{cor:cvE}
For the total external length of the Bolthausen-Sznitman coalescent, we have
$$
\frac{(\log n)^2}{n}\een-\log n-\log\log n\xrightarrow[n\to\infty]{d}Z-1.
$$

\end{cor}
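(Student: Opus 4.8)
The plan is to obtain Corollary~\ref{cor:cvE} as a soft consequence of Theorem~\ref{th:cvI} together with the already-known asymptotics~\eqref{eqlen} for the total length, using only the pathwise identity $\len=\ien+\een$ and Slutsky's theorem. All three lengths are constructed on the same probability space, so this decomposition is an almost-sure equality; the only point to be checked is that a distributional limit for $\len$ and a limit in probability for $\ien$ combine in the expected fashion.

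Concretely, I would write
$$
\frac{(\log n)^2}{n}\een-\log n-\log\log n=\left(\frac{(\log n)^2}{n}\len-\log n-\log\log n\right)-\frac{(\log n)^2}{n}\ien.
$$
By~\eqref{eqlen} the bracketed term converges in distribution to the $1$-stable variable $Z$, while by Theorem~\ref{th:cvI} the last term converges in probability to the constant $1$. Since subtracting a sequence that converges in probability to a constant affects a weak limit only by a deterministic translation, Slutsky's theorem gives that the left-hand side converges in distribution to $Z-1$, which is exactly the assertion.

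It is worth recording how this is consistent with the Dhersin--M\"ohle result that $\een/\len\to1$ in probability: from $\een/\len=1-\ien/\len$ and
$$
\frac{\ien}{\len}=\frac{(\log n)^2\ien/n}{(\log n)^2\len/n},
$$
Theorem~\ref{th:cvI} makes the numerator tend to $1$ while~\eqref{eqlen} forces the denominator to grow like $\log n$, so $\ien/\len\to0$ and $\een/\len\to1$. At the same time $\ien$ is of order exactly $n/(\log n)^2$, which is precisely the order needed to shift the centered rescaled external length by the deterministic amount $1$ relative to the limit of $\len$; this is the ``almost, but not completely true'' phenomenon announced in the introduction.

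As to where the difficulty lies: there is essentially no obstacle in the corollary itself, which is the two displayed lines above. The entire weight of the argument sits in Theorem~\ref{th:cvI}, whose proof must be carried out beforehand via the representation $\ien=\sum_{k=1}^{\ton-1}\yyen k\,\mathbf{e}_k/(\xen k-1)$ together with control of the block-counting chain $\xen k$, of the internal-branch counter $\yyen k$, and of $\ton$ through the Iksanov--M\"ohle coupling; granting that, the present statement is immediate.
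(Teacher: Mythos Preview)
Your proof is correct and matches the paper's approach exactly: the paper derives the corollary in one line by writing $\een=\len-\ien$ and combining~\eqref{eqlen} with Theorem~\ref{th:cvI}, which is precisely your Slutsky argument. The additional remarks on consistency with \cite{DM12} are a nice elaboration but not needed for the proof itself.
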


Observe that the Bolthausen-Sznitman coalescent can be seen as a special case ($\alpha=1$) of the so-called $Beta(2-\alpha,\alpha)$-coalescent which class is defined for $0<\alpha<2$. 
M\"ohle's work \cite{Moh10} shows that in the case $0<\alpha<1$ the variable $\een/n$ converges in law to a random variable defined in terms of a driftless subordinator depending on $\alpha$.
For $1<\alpha<2$, we refer to \cite{KSW13} where it is proven that
$(\een-cn^{2-\alpha})/n^{1/\alpha+1-\alpha}$ converges weakly to a stable r.v. of index $\alpha$, $c$ being a constant also depending on $\alpha$ (see also \cite{BBS08, BBS07, DY12}).
In Kingman's case ($\alpha\to2$) a logarithmic correction appears and the limit law is normal (see \cite{JK11}).

The remainder of the paper is structured as follows. In Section 2, we prove our main results using a  coupling method which was introduced in \cite{IM07} that provides more information of the chain $\xen{}=(\xen k,k\geq0)$. Finally,  Section \ref{sec:mut} is devoted to the asymptotic behaviour  of the number of mutations appearing in external and internal branches of the Bolthausen-Sznitman coalescent.

\section{Proofs}
\subsection{A coupling.} 

In this section, we use the coupling method introduced in \cite{IM07} in order to study the number of jumps $\ton$.

Let $(V_i)_{i\ge 1}$ be a sequence of  i.i.d. random variables with distribution  
\begin{equation}\label{eq:tailV}
\P(V_1=k)=\frac{1}{k(k+1)}, \qquad k\geq1.
\end{equation}
Note that $\P(V_1\geq k)=1/k.$ Let $S_n=V_1+\dots+V_n$.
It is well-known, see for instance \cite{GdH00}, that 
\begin{equation}\label{GdH}
\frac{S_n-n\log n}{n}
\xrightarrow[n\to\infty]{d}Z,
\end{equation}
where   $Z$ is the  stable random variable  that appears in \eqref{eqlen}.
We have the following functional limit result,
with a limit, which is certainly a L\'evy process.

\begin{lem}\label{lem:fltV}
The process $(L_n(t),0\leq t\leq1)$ defined by
 $$
L_n(t)=\frac{S_{\lfloor nt\rfloor}-\lfloor nt\rfloor\log n}{n}
$$
converges weakly in the Skorohod space $\cd[0,1]$.
\end{lem}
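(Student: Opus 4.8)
The plan is to identify the limiting Lévy process and then prove weak convergence in $\cd[0,1]$ via the standard criterion of convergence of finite-dimensional distributions together with tightness. The one-dimensional marginal at $t=1$ is already pinned down by \eqref{GdH}: $L_n(1)\xrightarrow{d}Z$, the spectrally positive $1$-stable random variable of the statement. Since $S_{\lfloor nt\rfloor}$ is a sum of $\lfloor nt\rfloor$ i.i.d. copies of $V_1$ and the $V_i\ge 1$ are in the domain of attraction of a $1$-stable law (with $\P(V_1\ge k)=1/k$, so the tail index is $1$ and we are in the barely-non-integrable regime that forces the $n\log n$ centering), the natural candidate limit is the $1$-stable Lévy process $(L(t),0\le t\le 1)$ with $L(1)\stackrel{d}{=}Z$; by stability and stationary independent increments, $L(t)\stackrel{d}{=}tZ+\frac{1}{\pi}\,t\log\frac1t\cdot$(something) — more precisely the increment over an interval of length $t$ has characteristic exponent $t\,\Psi(\theta)+$ a linear-in-$\theta$ term coming from the interaction of the stable scaling with the logarithmic centering. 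I would make this precise by writing, for $0\le s<t\le 1$,
\[
L_n(t)-L_n(s)=\frac{S_{\lfloor nt\rfloor}-S_{\lfloor ns\rfloor}-(\lfloor nt\rfloor-\lfloor ns\rfloor)\log n}{n},
\]
which is itself of the form $L_m(1)$ with $m=\lfloor nt\rfloor-\lfloor ns\rfloor\sim n(t-s)$, up to replacing $\log n$ by $\log m+\log\frac{n}{m}=\log m-\log(t-s)+o(1)$; feeding this into \eqref{GdH} gives that the increment converges to $(t-s)Z'-\log(t-s)\cdot(t-s)/(\text{normalization})$ with $Z'$ an independent copy, which I would package as: the increment converges in law to $L(t)-L(s)$, with the explicit characteristic exponent $(t-s)\Psi(\theta)+ i\theta(t-s)\log(t-s)$. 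Independence of increments over disjoint blocks is immediate from the i.i.d. structure of the $V_i$, so convergence of finite-dimensional distributions follows by combining these marginal statements with the continuous mapping / independence argument.

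For tightness in $\cd[0,1]$ I would invoke a classical sufficient condition for partial-sum processes of i.i.d. summands converging to a Lévy process — e.g. the criterion of Skorohod, or Aldous's criterion, or simply cite that convergence of the row sums of a null array to an infinitely divisible law upgrades to functional convergence (see Kallenberg, \emph{Foundations of Modern Probability}, or Jacod--Shiryaev). Concretely: the triangular array $\{V_i/n - (\log n)/n : 1\le i\le \lfloor n\cdot\rfloor\}$ is a null array, its row sums converge in distribution to the infinitely divisible law of $Z$ by \eqref{GdH}, the centering is deterministic and additive, hence by the functional limit theorem for triangular arrays the process $L_n$ converges in $\cd[0,1]$ to the Lévy process with the identified characteristics. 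The monotone (nondecreasing before centering) nature of $S_n$ also makes tightness easy to check by hand via modulus-of-continuity estimates on the uncentered process, since jumps are controlled by $\P(V_1\ge k)=1/k$.

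I expect the main obstacle to be purely bookkeeping rather than conceptual: keeping track of the extra linear-in-$\theta$ term that appears when one rescales $S_m$ by $1/n$ instead of $1/m$ and replaces $\log n$ by $\log m$, i.e. verifying that the increments really do have the claimed exponent $(t-s)\Psi(\theta)+i\theta(t-s)\log(t-s)$ and that these are consistent (the Kolmogorov consistency / additivity of exponents across a partition of $[0,1]$). This is where the precise form of $\Psi$ matters, because the $-i\theta\log|\theta|$ term in $\Psi$ does not simply scale linearly. Once that identity is checked, finite-dimensional convergence plus the null-array tightness statement finish the proof; I would not belabor the tightness verification since it is standard for increasing i.i.d. partial sums and is exactly the setting of the Iksanov--M\"ohle coupling already cited.
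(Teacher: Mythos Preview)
Your proposal is correct and follows essentially the same route as the paper: establish convergence of finite-dimensional distributions by writing each increment $L_n(t)-L_n(s)$ as a rescaled copy of $L_m(1)$ with $m\sim n(t-s)$ and applying \eqref{GdH} (the paper obtains the marginal limit $tZ+t\log t$ this way), then verify tightness. For the latter the paper chooses exactly one of the options you list, namely Aldous' criterion, and dispatches it in one line by using stationarity of increments to reduce $|L_n(T_n+\theta_n)-L_n(T_n)|$ to $|L_n(\theta_n)|$; your alternative of invoking a null-array functional limit theorem would work equally well but is less self-contained.
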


\begin{proof} We first verify that the convergence of finite-dimensional distributions holds. Let $t\ge 0$, from (\ref{GdH}), we deduce
 $$
 L_n(t)=\frac{S_{\lfloor nt\rfloor}-\lfloor nt\rfloor\log (nt)+\lfloor nt\rfloor\log t}{n}\xrightarrow[n\to\infty]{d}tZ+t\log t.
$$
Similarly, if we take $s\le t$, then 
\[
\Big(L_n(s),L_n(t)-L_n(s)\Big)\xrightarrow[n\to\infty]{d}(Z_1,Z_2),
\]
where $Z_1$ and $Z_2$ are independent random variables distributed  as $sZ+s\log s$ and $(t-s)Z+(t-s)\log(t-s)$, respectively.  The  mapping theorem implies that 
\[
\Big(L_n(s),L_n(t)\Big)\xrightarrow[n\to\infty]{d}(Z_1,Z_1+Z_2).
\]
A set of three or more time points can be treated in the same way, and hence the finite-dimensional distributions converge properly.

We  now check tightness via Aldous' criterion. Let $T_n$ be a  $L_n$-stopping time and $(\theta_n)$ a sequence of positive numbers such that
$\theta_n\to 0$, as $n$ increases. Then for $\varepsilon>0$, we have
\begin{align*}
 \P\Big(\Big| L_n(T_n+\theta_n)- L_n(T_n)\Big|=\varepsilon\Big)
& \leq \P\Big(\Big| L_n(\theta_n)\Big|\geq\varepsilon\Big)\\
&=\P\left(\left| \frac{S_{\lfloor n\theta_n\rfloor}-\lfloor n\theta_n\rfloor\log (n\theta_n)}{n\theta_n}\theta_n+\frac{\lfloor n\theta_n\rfloor\log \theta_n}{n}\right|\geq\varepsilon\right)
\end{align*}
which converges to 0. This completes the proof.
\end{proof}

In what follows, we use the following notation. 
For a stochastic process $(Z_n,n\ge 0)$ and a function $c(n)$ write $Z_n=O_p(c(n))$ as $n\to \infty$, if   $Z_n/c(n)$ is stochastically bounded as $n\to\infty$, i.e. if $\lim_{x\to\infty}\limsup_{n\to\infty}\P(|Z_n|>xc(n))=0$. 
We also write $Z_n=o_p(c(n))$ as $n\to\infty$, if $Z_n/c(n)$ goes to $0$ in probability.\\

From the above result, we deduce 
\begin{equation}\label{eq:supV}
\sup_{1\leq k\leq n}\Big|S_k-k\log n\Big|=O_p(n).
\end{equation}
We now define recursively  $(\rho(k))_{k\ge 0}$, a sequence of stopping times such that $ \rho(0)=0$ and 
$$
\rho(k+1)=\inf\left\{i>\rho(k), V_i+\sum_{j=1}^kV_{\rho(j)}< n\right\}
$$
with the convention $\inf\{\emptyset\}=\infty$.
In other words, the sequence $(\rho(k))_{k\ge 1}$ is the collection of indices of  the r.v.'s  $V_i$ such that their sum does not exceed $n-1$. It is proved in \cite{IM07} that $\ton$ and $\sup\{k,\rho(k)<\infty\}$ are equal in law, and that the terms of the block-counting Markov chain of the Bolthausen-Sznitman coalescent can be represented as $\xen 0=n$, and
$$
\xen k=n-\sum_{i=1}^kV_{\rho(i)}.
$$
Next, we define
$$
\son=\inf\{k,\rho(k)>k\},
$$
the first time that the random walk meets or exceeds $n$, and 
\begin{equation}\label{eq:defth}
\thtn\gamma=\ton-\frac{n}{(\log n)^{1+\gamma}}, \qquad \gamma \in (0,\infty],
\end{equation}
with the convention $\thtn\infty=\ton$.
Our first result allows  us to consider the random walk instead of the process of disappearing blocks  until time $\thtng$. 

\begin{prop}\label{lem:grolem}
 Let $0<\gamma<\gamma'\leq\infty$. Then as $n$ goes to $\infty$, we have
 \begin{equation}\label{eq:aser2}
 \P\Big(\thtng<\son\Big)\to 1\qquad \textrm{ and } \qquad  \frac{(\log n)^{\gamma}}{n}\xen\thtng
 \xrightarrow[]{\mathbb{P}}1.
\end{equation}
Moreover, 
\begin{equation}\label{eq:aser3}
\sup_{1\leq k\leq \thtng}\left|
\frac{\xen k}{\thtn{\gamma'}-k}-\log n\right|=o_p(\log n),
\end{equation}
for $n$ sufficiently large. 
\end{prop}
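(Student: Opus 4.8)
We sketch the strategy. The pivotal fact is the coupling of Section~2.1: since $\rho(j)=j$ for every $j<\son$, one has $\xen{k}=n-S_k$ for all $k<\son$, so that once we know $\thtng<\son$ with probability tending to $1$, all three assertions reduce to statements about the random walk $(S_k)$ alone. I would treat \reff{eq:aser2} and \reff{eq:aser3} in the order: first the event $\{\thtng<\son\}$, then the value $\xen{\thtng}$, then the uniform estimate (the second part of \reff{eq:aser2} being the case $k=\thtng$ of \reff{eq:aser3}). For the event $\{\thtng<\son\}$: by the identity above, $\{\thtng<\son\}=\{S_{\thtng}<n\}$, i.e.\ no skip has occurred up to coalescence time $\thtng$. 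Since every coalescence event removes at least one block, the number of events occurring strictly after time $\son-1$ is at most $\xen{\son-1}-1$, whence $\ton-\son<\xen{\son-1}=n-S_{\son-1}=:R_n$, the undershoot of $(S_k)$ at level $n$; thus $\{R_n<n/(\log n)^{1+\gamma}\}\subseteq\{\thtng<\son\}$, and it suffices to prove $\P(R_n\ge n/(\log n)^{1+\gamma})\to0$. Here I would use the exact identity
\[
\P(R_n>r)=\E\Big[\sum_{j\ge0}\frac{\mathbf 1\{S_j<n-r\}}{\,n-S_j\,}\Big],
\]
obtained by conditioning on $S_{\son-1}$ and using $\P(V_1\ge m)=1/m$ from \reff{eq:tailV}, together with the renewal-measure bound $\sum_{j\ge0}\P(S_j\le x)=O(x/\log x)$ valid in the infinite-mean regime (itself a consequence of \reff{GdH}); splitting the sum according to $S_j\le\sqrt n$ or not gives $\P(R_n>r)=O(n^{-1/2})+O(\log(n/r)/\log n)$, which tends to $0$ for $r=n/(\log n)^{1+\gamma}$.

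For the value $\xen{\thtng}$, I would work on $\{\thtng<\son\}$ and argue by inverting \reff{eq:abton}. By the Markov property, conditionally on $\xen{\thtng}=b$ the block-counting chain restarted at time $\thtng$ is that of a fresh coalescent on $b$ blocks, and the number of its coalescence events equals $\ton-\thtng=n/(\log n)^{1+\gamma}$ by the definition \reff{eq:defth}; applying \reff{eq:abton} to that fresh coalescent yields $n/(\log n)^{1+\gamma}=\tau^{(b)}=\frac{b}{\log b}(1+o_p(1))$. Solving for $b$, and using the (approximate) monotonicity of $m\mapsto\tau^{(m)}$ to make the inversion legitimate, gives $b=\frac{n}{(\log n)^{1+\gamma}}\log b\,(1+o_p(1))$, hence $\log b=\log n\,(1+o(1))$ and finally $(\log n)^{\gamma}\xen{\thtng}/n\xrightarrow[n\to\infty]{\P}1$.

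For the uniform estimate \reff{eq:aser3}, again on $\{\thtng<\son\}$ we have $\xen{k}=n-S_k$ for $k\le\thtng$, so for such $k$,
\[
\xen{k}-(\thtn{\gamma'}-k)\log n=\Big(\xen{\thtng}-(\thtn{\gamma'}-\thtng)\log n\Big)+\Big(\textstyle\sum_{i=k+1}^{\thtng}V_i-(\thtng-k)\log n\Big),
\]
and with $\ell:=\thtng-k$ the normaliser obeys $\thtn{\gamma'}-k=(\thtn{\gamma'}-\thtng)+\ell\asymp n/(\log n)^{\gamma}+\ell\log n$ since $\thtn{\gamma'}-\thtng=n/(\log n)^{1+\gamma}-n/(\log n)^{1+\gamma'}\sim n/(\log n)^{1+\gamma}$. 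The first bracket is $o_p(n/(\log n)^{\gamma})$ by the previous step. For the second bracket, \reff{GdH} applied to the window sum $\sum_{i=\thtng-\ell+1}^{\thtng}V_i$ (a sum of $\ell$ i.i.d.\ copies of $V_1$, hence of size $\ell\log\ell\,(1+o_p(1))$) gives the gap $\big|\sum V_i-\ell\log n\big|\le\ell\log(n/\ell)+O_p(\ell)$; because $\ell\le\thtng=O_p(n/\log n)$, a short computation shows $\ell\log(n/\ell)/(n/(\log n)^{\gamma}+\ell\log n)=o(1)$ uniformly in $\ell$, and for very short windows a direct Markov bound with \reff{eq:tailV} gives $\sum_{i=\thtng-\ell+1}^{\thtng}V_i=o_p(n/(\log n)^{\gamma})$. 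Combining these yields \reff{eq:aser3}.

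The main difficulty is this last step at the \emph{right end} $k\approx\thtng$, where the normaliser $\thtn{\gamma'}-k$ has decreased from order $n/\log n$ down to order $n/(\log n)^{\gamma}$: there the crude bound $\sup_{k\le\thtng}|S_k-k\log n|=o_p(n)$ that Lemma~\ref{lem:fltV} provides near $t=0$ is too weak, and one genuinely needs the refined behaviour of the increments of $(S_k)$ over windows of length $\sim n/(\log n)^{1+\gamma}$ anchored at the \emph{random} index $\thtng$, together with the heavy tail \reff{eq:tailV} of $V_1$. Handling the randomness of that index---which is not a stopping time of $(S_k)$---simultaneously with the heavy tails is the delicate point; conditioning on $\ton$, applying \reff{GdH} to the window increment, and using the undershoot bound to tie $\son$ to $\ton$, should push it through. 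By comparison the first assertion of \reff{eq:aser2} is soft once the renewal estimate for the undershoot is in hand.
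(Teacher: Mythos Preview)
Your undershoot argument for the first assertion of \reff{eq:aser2} is a legitimate alternative route and has the pleasant feature of handling all $\gamma>0$ at once, whereas the paper first establishes the result for the stopping times $\etang{c,\gamma}=\inf\{k:\xen k<cn/(\log n)^\gamma\}$ by an induction on $\lceil 2\gamma\rceil$ (the base case $\gamma<1$ using only the crude maximum bound \reff{eq:maxV}) and then transfers to $\thtng$ by the sandwich $\etang{1+\varepsilon,\gamma}\le\thtng\le\etang{1-\varepsilon,\gamma}$.

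The genuine gap in your proposal is the second assertion. You write ``by the Markov property, conditionally on $\xen{\thtng}=b$ the block-counting chain restarted at time $\thtng$ is that of a fresh coalescent on $b$ blocks'', and then invoke \reff{eq:abton} for that fresh chain to get $n/(\log n)^{1+\gamma}=\tau^{(b)}=(b/\log b)(1+o_p(1))$. But $\thtng=\ton-n/(\log n)^{1+\gamma}$ is \emph{not} a stopping time of $\xen{}$, so the conditional law of the post-$\thtng$ path, given $\xen{\thtng}=b$, is that of a coalescent from $b$ blocks \emph{conditioned} on $\tau^{(b)}=n/(\log n)^{1+\gamma}$, not a fresh one; the relation ``$\tau^{(b)}=b/\log b\,(1+o_p(1))$'' does not apply to this conditioned object, and no amount of monotonicity of $m\mapsto\tau^{(m)}$ repairs this. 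The paper sidesteps the issue precisely by working with the genuine stopping times $\etang{c,\gamma}$: at $\etang{c,\gamma}$ the strong Markov property is available, one gets $\ton-\etang{c,\gamma}=cn/(\log n)^{1+\gamma}(1+o_p(1))$ from \reff{eq:abton}, and the sandwich with $c=1\pm\varepsilon$ then pins down both the location of $\thtng$ and the value $\xen{\thtng}$.

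The same difficulty infects your treatment of \reff{eq:aser3}: your window sum $\sum_{i=k+1}^{\thtng}V_i$ has a random upper index that is not a stopping time, so it is not a sum of $\ell$ i.i.d.\ copies of $V_1$ and \reff{GdH} does not apply to it; moreover you need the bound uniformly in $k$, which requires a maximal inequality you have not supplied. You yourself flag this as ``the delicate point'', but the suggested fix (conditioning on $\ton$ and tying $\son$ to $\ton$ via the undershoot) does not produce the required uniform control. The paper's induction is exactly what buys this: for $\gamma<1$ the crude estimate $\sup_{k\le 2n/\log n}|S_k-k\log(2n/\log n)|=O_p(n/\log n)$ combined with $\ton-\etang{c,\gamma}\asymp n/(\log n)^{1+\gamma}$ gives \reff{eq:aser3} directly; for larger $\gamma$ one restarts at the stopping time $\hetan=\etang{c,p/2}$ and applies the $\gamma<1$ case to the fresh chain, lifting the range to $\gamma\le(p+1)/2$. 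Introducing the $\etang{c,\gamma}$ is the missing idea.
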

In order to prove this proposition, we first show that a similar result holds for the family  of stopping times
$$
\etang{c,\gamma}
=\inf\left\{k,\xen k<\frac{cn}{(\log n)^{\gamma}}\right\},
$$
where $c$ is a positive constant, and then note that for $\epsilon>0$,
\[
\P\Big(\etang{1-\epsilon, \gamma}\le \thtn\gamma\le\etang{1+\epsilon, \gamma} \Big)\xrightarrow[n\to\infty]{}1.
\]
Hence,  the proof of Proposition \ref{lem:grolem} relies on the following Lemma.
\begin{lem}\label{lem:grolem1}
 Let $0<\gamma<\infty$. Then as $n$ goes to $\infty$, we have
 \begin{equation*}\label{eq:aserLem1}
 \P\Big(\etang{c,\gamma}<\son\Big)\to 1\qquad \textrm{ and } \qquad  \frac{(\log n)^{\gamma}}{cn}\xen{\etang{c,\gamma}}
 \xrightarrow[]{\mathbb{P}}1.
\end{equation*}
Moreover, 
\begin{equation*}\label{eq:aserLem2}
\sup_{1\leq k\leq \etang{c,\gamma}}\left|
\frac{\xen k}{\tau^{(n)}-k}-\log n\right|=o_p(\log n),
\end{equation*}
for $n$ sufficiently large. 
\end{lem}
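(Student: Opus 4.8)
The strategy is to transport the statement, via the Iksanov--M\"ohle coupling recalled above, to the increasing heavy-tailed random walk $(S_k)_{k\ge 0}$, and then to argue purely about its first-passage times and its windowed increments. Two facts are used throughout: on $\{k<\son\}$ one has $\xen k=n-S_k$, and on $\{\etang{c,\gamma}<\son\}$ the time $\etang{c,\gamma}$ is the first passage of $(S_k)$ above $b_n:=n-cn/(\log n)^{\gamma}$, so that $\xen{\etang{c,\gamma}}=n-S_{\etang{c,\gamma}}$ there.

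The first step is to prove $\P(\etang{c,\gamma}<\son)\to1$ together with $\xen{\etang{c,\gamma}}=\tfrac{cn}{(\log n)^{\gamma}}(1+o_p(1))$, i.e. the first display of the Lemma. Both reduce to one fact about $(S_k)$: for every fixed $M>0$ the undershoot and the overshoot of $(S_k)$ at its first passage above a level $\ell_n\sim n$ are $o_p\!\big(n/(\log n)^{M}\big)$, i.e. the walk crosses high levels "smoothly". To see this one decomposes the event $\{\text{undershoot}>h\}$ over the dyadic bands $[\ell_n-2^{j+1}h,\ell_n-2^{j}h)$: the walk can leave band $j$ for $[\ell_n,\infty)$ only through an increment $\ge 2^{j}h$, of probability $2^{-j}/h$; together with the renewal bound $\E[\#\text{steps in band }j]\le 1+U(2^{j}h)=O(1)+O\!\big(2^{j}h/\log n\big)$ (where $U$ is the renewal function of $(S_k)$, with $U(x)\sim x/\log x$), this gives $\P(\text{undershoot}>h)=O(1/h)+O\!\big(\log_2(n/h)/\log n\big)=o(1)$ for $h=n/(\log n)^{M}$. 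Applying this at $\ell_n=n$ gives $\P(n-S_{\son-1}\ge cn/(\log n)^{\gamma})\to0$; since $\{\etang{c,\gamma}\ge\son\}\subseteq\{\xen{\son-1}\ge cn/(\log n)^{\gamma}\}=\{n-S_{\son-1}\ge cn/(\log n)^{\gamma}\}$ the first claim follows, and applying it at $\ell_n=b_n$ controls the overshoot there, so $\xen{\etang{c,\gamma}}=(n-b_n)-(S_{\etang{c,\gamma}}-b_n)=\tfrac{cn}{(\log n)^{\gamma}}-o_p\!\big(n/(\log n)^{\gamma}\big)$.

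For the second display, note first that after $\son$ the coalescent runs on $\xen{\son-1}=n-S_{\son-1}$ blocks, so $\ton-\son\le\xen{\son-1}=o_p(n/(\log n)^{M})$ for every $M$, while from \eqref{GdH} (used with a varying number of steps) one gets $\son,\etang{c,\gamma}\sim n/\log n$ and, a single increment covering the distance $n-b_n$ being too rare, $\son-\etang{c,\gamma}\ge\delta n/(\log n)^{1+\gamma}$ with probability $\to1$ for small $\delta$. Hence $\ton-k=(\son-k)(1+o_p(1))$ uniformly over $k\le\etang{c,\gamma}$, and, discarding the negligible undershoot, it remains to prove, with $W_j:=S_j-j\log n$,
\[
\sup_{1\le k\le\etang{c,\gamma}}\frac{|W_{\son}-W_k|}{\son-k}=o_p(\log n).
\]
Since all the windows $[k,\son]$ share the endpoint $\son$, I split $\{k\le\etang{c,\gamma}\}$ into the $O(\log\log n)$ dyadic scales $I_j=\{k:2^{j}L\le\son-k<2^{j+1}L\}$, $L:=\lfloor\delta n/(\log n)^{1+\gamma}\rfloor$; on $I_j$ the ratio is at most $(2^{j}L)^{-1}$ times the oscillation of $W$ over a window of length $2^{j+1}L$, and the windowed form of \eqref{eq:supV} (equivalently Lemma \ref{lem:fltV} applied with $2^{j+1}L$ steps) bounds that oscillation by $O_p\!\big(2^{j+1}L\,(1+\log\tfrac{n}{2^{j+1}L})\big)=O_p(2^{j+1}L\log\log n)$, so each scale contributes $O_p(\log\log n)$. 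The randomness of $\son$ and $\etang{c,\gamma}$ is handled by restricting to the event $\{\delta n/(\log n)^{1+\gamma}\le\son-\etang{c,\gamma},\ \son\le Cn/\log n\}$ and bounding the relevant oscillations by the running maximal oscillation of $W$ over sliding windows inside $[0,Cn/\log n]$; since $\etang{c,\gamma}$ is concentrated near $n/\log n$ only $O((\log n)^{\gamma-1})$ blocks per scale occur, which (with the $O(1/\lambda)$ tail of the rescaled window oscillation) is harmless for $\gamma$ small, the general case being reduced to that by iterating the whole argument finitely often, restarting from the state $\xen{\etang{c',\gamma'}}$ with $\gamma'$ slightly below $\gamma$ and using the strong Markov property of the coalescent.

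The main obstacle is precisely this uniformity. A global bound such as $\sup_{j\le n}|W_j|=O_p(n\log\log n/\log n)$ is far too lossy: for $k$ near $\etang{c,\gamma}$ the window $[k,\son]$ is only polylogarithmically shorter than $n$ whereas the quantity $(\son-k)\log n$ to be matched is merely of order $n/(\log n)^{\gamma}$, so one genuinely needs window-localized fluctuation estimates, must use that all windows end at $\son$ in order to collapse "all starting points" into $O(\log\log n)$ scales, and must keep the interplay between the random first-passage endpoints and these oscillations under control. The renewal estimate for the time the walk spends in a band, used in the first step, is a secondary technical ingredient.
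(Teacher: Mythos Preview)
Your overall strategy is sound and would go through, but it is worth seeing where it genuinely diverges from the paper's argument and where it does not.

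For the first display your renewal/overshoot argument is a real alternative. The paper instead bounds $\sup_{k\le\ton}V_k=O_p(n/(\log n)^{1-\varepsilon})$, which only yields $\P(\son\le\etang{c,\gamma})\to0$ for $\gamma<1$; it then has to carry this statement through the same strong-Markov induction used for the second display. Your dyadic-band estimate $\P(\text{undershoot}>h)\le\sum_j U(2^jh)\,\P(V>2^jh)=O(\log(n/h)/\log n)$ delivers the claim for every fixed $\gamma$ in one stroke, and the same estimate at level $b_n$ immediately gives $X^{(n)}_{\etang{c,\gamma}}\sim cn/(\log n)^\gamma$. This is cleaner than the paper's route and decouples the first display from the induction entirely.

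For the second display, however, your approach and the paper's coincide in substance. You write that the global bound $\sup_j|W_j|=O_p(n\log\log n/\log n)$ is ``far too lossy'', but this is exactly what the paper uses: dividing that bound by $\ton-\etang{c,\gamma}\sim cn/(\log n)^{1+\gamma}$ gives $O_p((\log n)^\gamma\log\log n)=o_p(\log n)$ for $\gamma<1$, which is the base case. Your dyadic decomposition over windows $[\sigma-2^{j+1}L,\sigma]$, together with the union bound over sliding windows, is more elaborate machinery that still produces only a base case valid for small $\gamma$ (your own bound $O((\log n)^{\gamma-1})$ on the number of blocks shows this), and you then invoke precisely the same strong-Markov restart at $\etang{c',\gamma'}$ that the paper uses. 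So the second display is proved by the same induction in both arguments; the paper's base case is simply shorter.

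In short: your first-display argument is a genuine and attractive simplification; your second-display argument is correct but not an improvement, and the global bound you set aside is in fact adequate for the base case.
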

\begin{proof} First we prove the result for $\gamma<1$. Observe, from 
  \eqref{eq:tailV},  that for any $\varepsilon>0$
 $$
\P\left(V_k<\frac{n}{(\log n)^{1-\varepsilon}}, \text{ for all }k\leq2n/\log n\right)
\ge\left(1-\frac{(\log n)^{1-\varepsilon}}{n}\right)^{2n/\log n}\xrightarrow[n\to\infty]{} 1.
$$
Now, since $\P(\ton\leq\frac{2n}{\log n})
\to 1$, as $n$ increases (which follows from \eqref{eq:abton}), we get 
\begin{equation}\label{eq:maxV}
 \sup_{1\leq k\leq\ton}V_k=O_p\left(\frac{n}{(\log n)^{1-\varepsilon}}\right).
\end{equation}
For simplicity, we write $\etan$ instead of $\etang{c,\gamma}$. Then, it follows
\begin{align*}
 \P\Big(\son\leq\etan\Big)
 &=\P\Big(V_k\geq\xen{k-1}, \,\,\text{ for some }k\leq\etan\Big)\\
 &\leq\P\left(V_k\geq\frac{cn}{(\log n)^{\gamma}},\,\, \text{ for some }k\leq\ton\right)\\
&=\P\left(\sup_{1\leq k\leq\ton}V_k\geq\frac{cn}{(\log n)^{\gamma}}\right),
\end{align*}
thus if we take $\varepsilon\in(0,1-\gamma)$ in (\ref{eq:maxV}), we deduce
\begin{equation}\label{eq:siginfeta}
 \P\Big(\son\leq\etan\Big)
\xrightarrow[n\to\infty]{} 0.
\end{equation}
On the event $\{\son>\etan\}$, it is clear that
$$
 \sup_{1\leq k\leq\etan}\left|\frac{\xen k}{\xen {k-1}}-1\right|
 =\sup_{1\leq k\leq\etan}\frac{V_k }{\xen {k-1}}
\leq \frac{(\log n)^\gamma}{cn}\sup_{1\leq k\leq\ton}V_k.
$$
Hence,   from (\ref{eq:maxV}) and  (\ref{eq:siginfeta}), we obtain
\begin{equation}
 \label{eq:ratio succ}
  \sup_{1\leq k\leq\etan}\left|\frac{\xen k}{\xen {k-1}}-1\right|
=o_p(1).
\end{equation}
In particular, since 
$\xen\etan\leq\frac{cn}{(\log n)^\gamma}\leq\xen{\etan-1}$, we get 
\begin{equation}
 \label{eq:asser2step1}
\frac{(\log n)^\gamma}{cn}\xen\etan
\xrightarrow[n\to\infty]{\P}1.
\end{equation}
Next, we note
$$ \xen k-(\ton-k)\log n
 =\xen k-n+k\log\left(\frac{2n}{\log n}\right)
 +(n-\ton\log n)+k\log\left(\frac{\log n}{2}\right),
$$
and from (\ref{eq:abton}), it is clear
\[
\frac{(\log n)^2}{n}\ton=\log n +O_p(\log\log n).
\]
 Then on the event
$\{\etan<\son,\etan<\frac{2n}{\log n}\}$, it follows from   \eqref{eq:supV} that
\begin{align*}
 \sup_{1\leq k\leq\etan}\left|\xen k-(\ton-k)\log n\right|
&\leq\sup_{1\leq k\leq2n/\log n}\left|S_k-k\log\left(\frac{2n}{\log n}\right)\right|
+O_p\left(\frac{n\log\log n}{\log n}\right)\\
&=O_p\left(\frac{n\log\log n}{\log n}\right).
\end{align*}
Finally using (\ref{eq:asser2step1}) and the strong Markov property for $\tilde X_k^{(n)}= \xen{k+\etan}$ , we deduce
\begin{equation}\label{eq:tau-eta}
 \ton-\etan
 =\tau^{(\tilde X_0^{(n)})}=\frac{\xen\etan}{\log \xen\etan}(1+o_p(1))
 =\frac{cn}{(\log n)^{1+\gamma}}(1+o_p(1)).
\end{equation}
Then, putting all the pieces together, we get
$$
 \sup_{1\leq k\leq \etan}\left|
\frac{\xen k}{\ton-k}-\log n\right|
\leq
\frac{ \sup_{1\leq k\leq \etan}\left|
\xen k-(\ton-k)\log n\right|}{\ton-\etan}
=O_p((\log n)^\gamma\log\log n),
$$
and since $\gamma<1$,
\begin{equation}
\label{eq:recu1}
 \sup_{1\leq k\leq \etan}\left|
\frac{\xen k}{\ton-k}-\log n\right|
=o_p(\log n).
\end{equation}

Next, we will prove (\ref{eq:siginfeta}), (\ref{eq:ratio succ}), (\ref{eq:asser2step1}), (\ref{eq:tau-eta}) and (\ref{eq:recu1})
for any $\gamma>0$.
We show that this claim holds for $\gamma\leq{p/2}$ for any $p \in \mathbb N$, using induction on $p$. The proof  for $p=1$ has just been done.

For the induction step suppose that the asymptotics in (\ref{eq:siginfeta}) to (\ref{eq:recu1}) hold for $\gamma\leq{p/2}$. For simplicity, we write
$\hetan=\etang{c,{p}/{2}}$. The idea is to use the strong Markov property at the stopping time $\hetan $  and apply the above results for $\gamma < 1$ to the Markov chain $\hat X_k^{(n)}= X_{k+\hetan}^{(n)}$ started at $\hat n= X_{\hetan}^{(n)}$ (instead of $n=\xen 0$). Define the family of stopping times
$$
\zetan=
\inf\left\{k, \xen k<\frac{\hat n}{(\log\hat n)^{2/3}}\right\}.
$$

Observe that $\zetan=\hetan+\eta_{1,\frac{2}{3}}^{(\hat n)}$.
Hence, using the strong Markov property  at the stopping time $\hetan $ and the behaviour in (\ref{eq:asser2step1}), with $\gamma=2/3$, we get
$$
\frac{(\log \xen\hetan)^{\frac{2}{3}}}{\xen\hetan}\xen\zetan
\xrightarrow[n\to\infty]{\P}1.
$$
Then, from  this asymptotic behaviour and the induction hypothesis taken in (\ref{eq:asser2step1}),
$$
\frac{(\log n)^{\frac{2}{3}+\frac{p}{2}}}{cn}\xen\zetan
\xrightarrow[n\to\infty]{\P}1.
$$
From this behaviour, from (\ref{eq:asser2step1}) and from
$$
\frac{(\log n)^\gamma}{cn}\xen{\etan} < 1 \le \frac{(\log n)^\gamma}{cn}\xen{\etan-1}
$$
we obtain for $p/2<\gamma\leq(p+1)/2$
\begin{equation}\label{eq:hetaetazeta}
\P\Big(\hetan< \etan\le \zetan\Big)\xrightarrow[n\to\infty]{} 1. 
\end{equation}

Now, on the event $\{\son>\hetan\}$, using the strong Markov property at $\hetan$ and (\ref{eq:siginfeta}) with the initial state $\xen\hetan$, we get
$$
\P\Big(\son\le \zetan\big|\son>\hetan\Big)\xrightarrow[n\to\infty]{}  0 .
$$
The induction hypothesis gives $\P(\son>\hetan)\to 1$, as $n$ goes to $\infty$.  These two facts together lead to
$$
\P\Big(\son>\etan\Big)\xrightarrow[n\to\infty]{}  1,
$$
for $\gamma\in (p/2,(p+1)/2]$.

From (\ref{eq:ratio succ}) and again the strong Markov property at $\hetan$, we get
$$
  \sup_{\hetan\leq k\leq\zetan}\left|\frac{\xen k}{\xen {k-1}}-1\right|
=o_p(1).
$$
From the above behaviour, (\ref{eq:hetaetazeta}) and  the induction hypothesis, we have
$$
  \sup_{1\leq k\leq\etan}\left|\frac{\xen k}{\xen {k-1}}-1\right|
=o_p(1),
$$
for $\gamma\in (p/2,(p+1)/2]$.
Again  the strong Markov property together with the above behaviour give us  for all $\gamma\in (p/2,(p+1)/2]$,
\[
 \ton-\etan =\frac{cn}{(\log n)^{1+\gamma}}(1+o_p(1)).
\]

From (\ref{eq:recu1}) and the strong Markov property, we get
$$
 \sup_{\hetan\leq k\leq \zetan}\left|
\frac{\xen k}{\ton-k}-\log \xen\hetan\right|
=o_p\Big(\log \xen\hetan\Big).
$$
We know from the induction hypothesis that $\log \xen\hetan\sim\log n$, as $n$ goes to $\infty$.
We then obtain, from  (\ref{eq:hetaetazeta}) and using again the induction hypothesis, 
$$
 \sup_{1\leq k\leq \etan}\left|
\frac{\xen k}{\ton-k}-\log n\right|
=o_p(\log n),
$$
for all $\gamma\in (p/2,(p+1)/2]$. Hence the induction is complete and the behaviour in
(\ref{eq:siginfeta}) to (\ref{eq:recu1}) hold for any $\gamma>0$.
\end{proof}

\begin{proof}[Proof of Proposition \ref{lem:grolem}] 
We first  recall the definition of $\thtng$ in (\ref{eq:defth}) and
define $\etang-=\etang{1-\varepsilon,\gamma}$ and $\etang+=\etang{1+\varepsilon,\gamma}$.
From (\ref{eq:tau-eta}), it is clear
$$
\P\Big(\etang+\leq\thtng\leq\etang-\Big)\xrightarrow[n\to\infty]{} 1,
$$
and from (\ref{eq:siginfeta}), we deduce
$$
\P\Big(\son>\etang-\Big)\xrightarrow[n\to\infty]{}  1.
$$
Thus the first asymptotic behaviour in  (\ref{eq:aser2}) holds. Also note
$$
\P\Big(\xen{\etang-}\leq\xen\thtng\leq\xen{\etang+}\Big)\xrightarrow[n\to\infty]{} 1,
$$
then the second asymptotic behaviour in (\ref{eq:aser2}) follows from (\ref{eq:asser2step1}).

From (\ref{eq:recu1}), we get
$$
 \sup_{1\leq k\leq\thtng}\left|
\frac{\xen k}{\ton-k}-\log n\right|
=o_p(\log n)
$$
which gives (\ref{eq:aser3}) for $\gamma'=\infty$.
Also
$$
 \sup_{1\leq k\leq\thtng}\left|
\frac{\thtn{\gamma'}-k}{\ton-k}-1\right|
=
 \sup_{1\leq k\leq\thtng}\left|
\frac{\ton-\thtn{\gamma'}}{\ton-k}\right|
\leq
\frac{\ton-\thtn{\gamma'}}{\ton-\thtng}
=
\frac{(\log n)^{\gamma}}{(\log n)^{\gamma'}}(1+o_p(1)).
$$
This give us (\ref{eq:aser3}). This completes the proof.
\end{proof}

\subsection{Proof of Theorem \ref{th:cvI}} 

We first define 
$$\tien=\sum_{k=1}^{\tau^{(n)}-1}\frac{Y_{k}^{(n)}}{\xen k},$$
which is obtained by replacing the exponential random variables $\mathbf{e}_k$'s by their mean and approximating the denominator. Similarly, we define 
$$\hien=\sum_{k=1}^{\tau^{(n)}-1}\frac{\E[Y_{k}^{(n)}|\xen{}]}{\xen k},$$
which is obtained by replacing the random variables $\yyen k$ by its conditional expectation.
This new formulation is of interest. 
Indeed, similar as in \cite{KSW13} it is possible to determine $\hien$ via a recursive formula.
Let $\zen k$ be the number of external branches after $k$ jumps, $k\geq1$, and we take conditional expectation to each $\zen k$ with respect to $\xen{}$ and  $\zen{k-1}$. Observe that $\zen{k-1}-\zen{k}$ is the number of external branches which participate to $k$-th coalescent event.
Hence, this random variable is distributed as an   hypergeometric r.v. with parameters
$\xen{k-1}$, $\zen{k-1}$ and $1+\uen k$.
It is then clear 
$$
\E\Big[\zen k\Big|\xen{},\zen{k-1}\Big]=
\zen{k-1}-\Big(1+\uen k\Big)\frac{\zen{k-1}}{\xen{k-1}},
$$
then $$
\E\Big[\zen k\Big|\xen{}\Big]
=\E\Big[\zen {k-1}\Big|\xen{}\Big]\frac{\xen k-1}{\xen {k-1}},
$$
and
\[
\frac{\E\Big[\zen k\Big|\xen{}\Big]}{\xen k}
=\prod_{i=1}^k\left(1-\frac{1}{\xen i}\right).
\]
Finally, since  $\yyen k=\xen k-\zen k$, it follows
\begin{equation}\label{eq:newdefhien}
  \hien=\sum_{k=1}^{\tau^{(n)}-1}\left(1-\prod_{i=1}^k\left(1-\frac{1}{\xen i}\right)\right).
\end{equation}
This last expression is a good way to understand the asymptotic behaviour of the total internal branch.

The following lemma provides the asymptotic behaviour of $ \hien$.
\begin{lem}\label{lemm:aincv}
As $n$ goes to $\infty$,
$$
\frac{(\log n)^2}{n}\hien \xrightarrow []{}1,
$$
in probability.
\end{lem}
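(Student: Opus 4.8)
The plan is to split the sum defining $\hien$ at the index $\thtng$ for a fixed $\gamma>1$, say $\gamma=2$, discard the resulting tail, and in the main block $1\le k\le\thtng-1$ replace the chain $\xen i$ by the deterministic surrogate $(\ton-i)\log n$ supplied by Proposition~\ref{lem:grolem}. Since every summand $1-\prod_{i=1}^k(1-1/\xen i)$ lies in $[0,1]$ and $\ton-\thtng=n/(\log n)^{1+\gamma}$ by~\eqref{eq:defth}, the tail is controlled at once:
$$0\le\sum_{k=\thtng}^{\ton-1}\Big(1-\prod_{i=1}^k\big(1-\tfrac1{\xen i}\big)\Big)\le\frac{n}{(\log n)^{1+\gamma}}=o\Big(\frac{n}{(\log n)^2}\Big),$$
so it suffices to prove $\frac{(\log n)^2}{n}A\xrightarrow[n\to\infty]{\mathbb{P}}1$, where $A:=\sum_{k=1}^{\thtng-1}\big(1-\prod_{i=1}^k(1-1/\xen i)\big)$.

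The next step is to reduce each summand of $A$ to the partial sum $s_k:=\sum_{i=1}^k 1/\xen i$. As $\xen i\ge 2$ for $i<\ton$, the elementary inequalities $0\le-\log(1-x)-x\le 2x^2$ and $0\le x-(1-e^{-x})\le x^2/2$ on $[0,\tfrac12]$ give, uniformly over $1\le k\le\thtng-1$,
$$\Big|\big(1-\prod\nolimits_{i=1}^k(1-1/\xen i)\big)-s_k\Big|\le 4\sum_{i=1}^k\frac1{\xen i^2}+\tfrac12 s_k^2\le\frac{4s_k}{\xen\thtng}+\tfrac12 s_k^2 ,$$
using $\xen i\ge\xen\thtng$ for $i\le\thtng$. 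On an event of probability tending to $1$ Proposition~\ref{lem:grolem} gives $\xen\thtng=\frac{n}{(\log n)^\gamma}(1+o_p(1))$ and (from~\eqref{eq:aser3} with $\gamma'=\infty$) $\xen i\ge\tfrac12(\ton-i)\log n$ for $i\le\thtng$; the latter yields $\sup_{k\le\thtng-1}s_k=O_p(\log\log n/\log n)$ and $\sum_{k=1}^{\thtng-1}s_k=O_p(n/(\log n)^2)$. Summing the displayed error over $k$ and multiplying by $(\log n)^2/n$ then produces an $o_p(1)$, so that $\tfrac{(\log n)^2}{n}A=\tfrac{(\log n)^2}{n}\sum_{k=1}^{\thtng-1}s_k+o_p(1)$.

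It then remains to evaluate $\sum_{k=1}^{\thtng-1}s_k$. Writing $\xen i=(\ton-i)(\log n)(1+\epsilon_i)$ with $\eta_n:=\sup_{i\le\thtng}|\epsilon_i|=o_p(1)$ by~\eqref{eq:aser3}, one gets $\sum_{k=1}^{\thtng-1}\big|s_k-\tfrac1{\log n}\sum_{i=1}^k\tfrac1{\ton-i}\big|\le\tfrac{2\eta_n}{\log n}\sum_{i=1}^{\thtng-1}\tfrac{\thtng-i}{\ton-i}\le\tfrac{2\eta_n\thtng}{\log n}=o_p(n/(\log n)^2)$, and comparing the inner harmonic sum with $\log\frac{\ton}{\ton-k}$ costs a further $O_p((\log n)^{\gamma-1})=o_p(n/(\log n)^2)$. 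Finally, Stirling's formula gives
$$\sum_{k=1}^{\thtng-1}\log\frac{\ton}{\ton-k}=\ton-\tfrac12\log(2\pi\ton)+O(1)-\sum_{m=1}^{\ton-\thtng}\log\frac{\ton}{m}=\ton\big(1+o_p(1)\big),$$
the subtracted sum being $O_p\big(n\log\log n/(\log n)^{1+\gamma}\big)$; dividing by $\log n$ and using $\ton=\frac{n}{\log n}(1+o_p(1))$ — a consequence of~\eqref{eq:abton} — yields $\tfrac{(\log n)^2}{n}\sum_{k=1}^{\thtng-1}s_k\xrightarrow[n\to\infty]{\mathbb{P}}1$, hence $\tfrac{(\log n)^2}{n}A\xrightarrow[n\to\infty]{\mathbb{P}}1$, which together with the tail bound proves the lemma.

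The delicate point is that $A$ ranges over $\Theta(n/\log n)$ indices, so a crude bound of the form (uniform per-term error)$\times$(number of terms) is never sharp enough. What makes the scheme work is that the leading term $\sum_k s_k$ is itself exactly of the target order $n/(\log n)^2$: a uniform \emph{relative} error of size $o_p(1)$ — the $\eta_n$ provided by Proposition~\ref{lem:grolem} — is therefore harmless, while every \emph{additive} error encountered is only polynomial in $\log n$ and is absorbed by the factor $n^{-1}$.
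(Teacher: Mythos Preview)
Your proof is correct and follows the same skeleton as the paper's: split the sum at a threshold $\thtn\gamma$, discard the short tail, linearize $1-\prod_{i\le k}(1-1/\xen i)$ to $s_k=\sum_{i\le k}1/\xen i$, and evaluate the main block via Proposition~\ref{lem:grolem}. The only differences are technical: the paper linearizes with the Bonferroni bounds $s_k-\sum_{i<j\le k}1/(\xen i\xen j)\le 1-\prod\le s_k$ instead of your Taylor estimates, and it evaluates $\sum_k s_k$ more directly by swapping the order of summation to $\sum_i(\thtn{}-i)/\xen i$ and reading off~\eqref{eq:aser3}, thereby avoiding your detour through harmonic sums and Stirling's formula; these are cosmetic choices and the arguments are essentially the same.
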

\begin{proof}
Let $\varepsilon>0$ and take $\thtng$ as  in (\ref{eq:defth}). We also let $\thtn-=\lfloor \thtn{1-\varepsilon}\rfloor$ and $\thtn+=\lfloor \thtn{1+\varepsilon}\rfloor$ and consider $\hien$ as it is given in \eqref{eq:newdefhien}. We now split $\hien$ in two parts, as follows
$$
\hen 1=\sum_{k=1}^{\thtn+ -1}\left(1-\prod_{i=1}^k\left(1-\frac{1}{\xen i}\right)\right),
$$
and
$$
\hen 2=\sum_{k=\thtn+}^{\ton-1}\left(1-\prod_{i=1}^k\left(1-\frac{1}{\xen i}\right)\right).
$$
Note that
$$\hen2\leq\ton-\thtn+\le\frac{n}{(\log n)^{2+\varepsilon}}+1, $$
which implies that 
\[
\frac{(\log n)^2}{n} \hen2\xrightarrow[n\to\infty]{} 0,\qquad  \textrm{ almost surely.} 
\] 
Then it is enough to study the behaviour of $\hen1$. In order to do so, we first note
$$
\sum_{i=1}^{k}\frac{1}{\xen i}
-\sum_{j=2}^{k}\sum_{i=1}^{j-1}\frac{1}{\xen i\xen j}
\leq
1-\prod_{i=1}^k\left(1-\frac{1}{\xen i}\right)
\leq
\sum_{i=1}^{k}\frac{1}{\xen i}.
$$
(This can be viewed as two Bonferroni inequalities for independent events with entrance probabilities $1/\xen i$.) 

On the one hand, 
$$ \sum_{k=1}^{\thtn+ -1}
 \sum_{i=1}^{k}\frac{1}{\xen i}
= \sum_{i=1}^{\thtn+ -1}
\frac{\thtn+-i}{\xen i}
$$
  and thus
$$ \sum_{i=1}^{\thtn- -1}
\frac{\thtn+-i}{\xen i}\le \sum_{k=1}^{\thtn+ -1}
 \sum_{i=1}^{k}\frac{1}{\xen i}
\le\sum_{i=1}^{\thtn+ -1}
\frac{\ton-i}{\xen i}.
$$ 
From  (\ref{eq:aser3}), we get 
$$ \frac{1}{\log n}(\thtn--1)(1+o_p(1))
\le \sum_{k=1}^{\thtn+ -1}
 \sum_{i=1}^{k}\frac{1}{\xen i}
\le\frac{1}{\log n}(\thtn+-1)(1+o_p(1)).
$$ 
From  the fact that $\thtn-,\thtn+\sim\ton\sim n/\log n$, as $n\to\infty$, we deduce
  \[
   \sum_{k=1}^{\thtn+ -1}
 \sum_{i=1}^{k}\frac{1}{\xen i}
=\frac{n}{(\log n)^2}(1+o_p(1)).
\]
On the other hand by inverting the sums, we obtain
\[
\sum_{k=1}^{\thtn+ -1}\sum_{j=2}^{k}\sum_{i=1}^{j-1}\frac{1}{\xen i\xen j}=
\sum_{j=2}^{\thtn+ -1}\sum_{k=j}^{\thtn+ -1}\sum_{i=1}^{j-1}\frac{1}{\xen i\xen j}= \sum_{j=2}^{\thtn+ -1}
\frac{\thtn+-j}{\xen j}\sum_{i=1}^{j-1}\frac{1}{\xen i}.
\]
Using   (\ref{eq:aser3}), we obtain \[
\sum_{k=1}^{\thtn+ -1}\sum_{j=2}^{k}\sum_{i=1}^{j-1}\frac{1}{\xen i\xen j}\le \sum_{j=2}^{\thtn+ -1}
\frac{\ton-j}{\xen j}\sum_{i=1}^{j-1}\frac{1}{\xen i}\le\frac {1+o_p(1)}{\log n}\sum_{j=1}^{\thtn+ -1}\sum_{i=1}^{j}\frac{1}{\xen i}.
\]
and finally
 \[
\sum_{k=1}^{\thtn+ -1}\sum_{j=2}^{k}\sum_{i=1}^{j-1}\frac{1}{\xen i\xen j}\le\frac n{(\log n)^3}(1+o_p(1)).
\]
Putting all the pieces together give us 
 \[
 \frac{(\log n)^2}{n}\hen1\xrightarrow[n\to\infty]{\P}1,
\]
which ends the proof.
\end{proof}

In order to prove Theorem \ref{th:cvI}, we just need to control our approximation. This is the aim of the next two lemmas.
\begin{lem}
\label{lem:app1}
As $n$ goes to $\infty$, 
$$ \ien-\tien = O_P(\sqrt n).$$
\end{lem}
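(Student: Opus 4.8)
The plan is to condition on the whole coalescent structure and exploit that the exponential weights $\mathbf{e}_k$ enter $\ien$ linearly and are independent of everything else; conditionally, $\ien-\tien$ then becomes a sum of independent random variables with easily computed conditional mean and variance, and a crude second-moment bound plus Chebyshev's inequality finishes the job.

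Concretely, I would set $\cf=\sigma(\xen k,\yyen k:k\ge0)$, so that $\ton$ is $\cf$-measurable, and write
$$
\ien-\tien=\sum_{k=1}^{\ton-1}\yyen k\Bigl(\frac{\mathbf{e}_k}{\xen k-1}-\frac{1}{\xen k}\Bigr).
$$
Since $(\mathbf{e}_k)_{k\ge1}$ is i.i.d.\ standard exponential and independent of $\cf$, conditionally on $\cf$ this is a sum of $\ton-1$ independent random variables; its conditional mean is the bias term $B_n:=\sum_{k=1}^{\ton-1}\yyen k/(\xen k(\xen k-1))$, and, using $\Var(\mathbf{e}_1)=1$, its conditional variance is $\sum_{k=1}^{\ton-1}\bigl(\yyen k/(\xen k-1)\bigr)^2$.

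Then I would bound both quantities by elementary arguments. For $1\le k\le\ton-1$ one has $\xen k\ge2$ and $\yyen k\le\xen k$, hence $\yyen k/(\xen k-1)\le2$, so the conditional variance is at most $4(\ton-1)\le4n$. For the bias, $B_n\le\sum_{k=1}^{\ton-1}1/(\xen k-1)$; since the $\xen k$ for $1\le k\le\ton-1$ form a strictly decreasing sequence of integers lying in $\{2,\dots,n-1\}$, the numbers $\xen k-1$ are distinct elements of $\{1,\dots,n-2\}$, whence $B_n\le\sum_{m=1}^{n-2}1/m=O(\log n)$ deterministically. Plugging these bounds into $\E[(\ien-\tien)^2]=\E[\Var(\ien-\tien\mid\cf)]+\E[B_n^2]\le4n+O((\log n)^2)=O(n)$ and applying Chebyshev's inequality yields $\ien-\tien=O_P(\sqrt n)$.

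I do not expect a serious obstacle. The two points needing a little care are: that conditioning on $\cf$—which contains the random number of terms $\ton$—still leaves $\mathbf{e}_1,\dots,\mathbf{e}_{\ton-1}$ i.i.d.\ standard exponential, which holds precisely because $(\mathbf{e}_k)$ is independent of $\cf$; and that the bias $B_n$ must be controlled at the level $O(\log n)$ rather than by the trivial $B_n\le\ton=O(n)$, which is exactly where the distinctness of the integers $\xen k$ is used. (The same computation in fact gives the sharper $\ien-\tien=o_p(\sqrt n)$, even $O_p(\sqrt{n/\log n})$, once one feeds in $\ton=O_p(n/\log n)$ from \eqref{eq:abton}, but $O_P(\sqrt n)$ is all that is needed in the sequel.)
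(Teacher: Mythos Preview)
Your proposal is correct and follows essentially the same route as the paper: condition on the coalescent structure, bound the conditional variance by $O(\tau^{(n)})\le O(n)$ via $\yyen k\le\xen k$, and control the bias by the harmonic-sum bound coming from the distinctness of the integers $\xen k$. The only cosmetic difference is that the paper splits $\ien-\tien$ into a centered sum plus a random remainder $\sum \yyen k\mathbf{e}_k/(\xen k(\xen k-1))$ and bounds the latter directly, whereas you first take the conditional expectation and work with the deterministic bias $B_n$; the key ingredients are identical.
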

\begin{proof}
Recall that $\xen{}$ denotes the Markov chain $(\xen k,  k\geq 0)$. 
A simple computation gives us
\[
\Big(\ien-\tien\Big)=
\sum_{k=1}^{\tau^{(n)}-1}\yyen k
 \frac{\mathbf{e}_k-1 }{\xen k}
 +\sum_{k=1}^{\tau^{(n)}-1}\yyen k
 \frac{\mathbf{e}_k}{\xen k(\xen k-1)} .
\]
Conditionally on $\xen{},\yyen{}$, the random variables $\displaystyle
   \yyen k\frac {\mathbf{e}_k-1}{\xen k }$ are independent with zero mean. This implies
$$
 \E\left[\left(\sum_{k=1}^{\tau^{(n)}-1}\yyen k
 \frac{\mathbf{e}_k-1 }{\xen k}\right)^2 \bigg|\xen{},\yyen{}\right]
= 
\sum_{k=1}^{\tau^{(n)}-1} \left(\frac{\yyen k
}{{\xen k}}\right)^2
\leq\ton \leq n,
$$
where the inequality follows  from the fact that $\yyen k\leq \xen k$ a.s.
Chebychev's inequality  implies  
\[
 \sum_{k=1}^{\tau^{(n)}-1}\yyen k
 \frac{\mathbf{e}_k-1 }{\xen k} = O_P(\sqrt n).
\]

Again using  that $\yyen k\leq \xen k$ a.s., we get
$$
\sum_{k=0}^{\tau^{(n)}-1}\yyen k
 \frac{\mathbf{e}_k}{\xen k(\xen k-1)}
 \leq \sum_{k=0}^{\tau^{(n)}-1}
 \frac{\mathbf{e}_k}{(\xen k-1)}
 \leq \sum_{k=0}^{{n}-1}
 \frac{\mathbf{e}_k}{( k-1)}.
$$
It is a classical result (used also for the total length of Kingman coalescent) that 
$$
\frac{1}{\log n}
\sum_{k=0}^{{n}-1}
 \frac{\mathbf{e}_k}{( k-1)}\xrightarrow[n\to\infty]{\P}1,
 $$
 which implies that 
 \[ \sum_{k=0}^{\tau^{(n)}-1}\yyen k
 \frac{\mathbf{e}_k}{\xen k(\xen k-1)} = O_P(\log n).
 \]
 This completes the proof.
\end{proof}

\begin{lem}
\label{lem:app2} As $n$ goes to $\infty$
$$ \tien-\hien = O_P(\sqrt n).$$

\end{lem}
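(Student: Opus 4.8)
The plan is to estimate $\tien-\hien$ in $L^2$ after conditioning on the block-counting chain $\xen{}=(\xen k,k\ge0)$. Since $\yyen k=\xen k-\zen k$ and $\xen k$ is $\sigma(\xen{})$-measurable, $\yyen k-\E[\yyen k\mid\xen{}]=-D_k$ with $D_k:=\zen k-\E[\zen k\mid\xen{}]$, so that
\begin{equation*}
\tien-\hien=-\sum_{k=1}^{\ton-1}\frac{D_k}{\xen k},
\end{equation*}
where $\zen k$ denotes the number of external branches after $k$ jumps (so $\zen 0=n$ and $D_0=0$). I would work with the filtration $\cf_k:=\sigma\big(\xen{};\,\zen 0,\dots,\zen k\big)$. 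By the recursive construction, conditionally on $\xen{}$ the sequence $(\zen k)_{k\ge0}$ is a Markov chain, so the identity $\E[\zen k\mid\cf_{k-1}]=\zen{k-1}(\xen k-1)/\xen{k-1}$ (established above in the form $\E[\zen k\mid\xen{},\zen{k-1}]$), together with $\E[\zen k\mid\xen{}]=\E[\zen{k-1}\mid\xen{}]\,(\xen k-1)/\xen{k-1}$, gives $\E[D_k\mid\cf_{k-1}]=\frac{\xen k-1}{\xen{k-1}}D_{k-1}$. Hence
\begin{equation*}
\Delta_j:=D_j-\frac{\xen j-1}{\xen{j-1}}\,D_{j-1},\qquad 1\le j\le\ton-1,
\end{equation*}
is a sequence of $(\cf_j)$-martingale differences, orthogonal after conditioning on $\xen{}$. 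Unrolling $D_k=\frac{\xen k-1}{\xen{k-1}}D_{k-1}+\Delta_k$ from $D_0=0$ yields $D_k=\sum_{j=1}^k\pi_{j,k}\Delta_j$ with $\pi_{j,k}:=\prod_{i=j+1}^{k}\frac{\xen i-1}{\xen{i-1}}$ (and $\pi_{k,k}:=1$), so interchanging the two summations,
\begin{equation*}
\tien-\hien=-\sum_{j=1}^{\ton-1}d_j\,\Delta_j,\qquad d_j:=\sum_{k=j}^{\ton-1}\frac{\pi_{j,k}}{\xen k}.
\end{equation*}

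Two elementary bounds then close the argument. The first is $d_j\le1$ for every $j$: since $\frac{\pi_{j,k}}{\xen k}=\frac1{\xen j}\prod_{i=j+1}^k\big(1-\tfrac1{\xen i}\big)$, setting $q_k:=\prod_{i=j+1}^k(1-1/\xen i)$ with $q_j:=1$ gives $q_{k-1}-q_k=q_{k-1}/\xen k$, hence $q_{k-1}=\xen k(q_{k-1}-q_k)\le\xen{j+1}(q_{k-1}-q_k)$ because $\xen{}$ is strictly decreasing; summing over $k$ and telescoping yields $\sum_{k=j}^{\ton-2}q_k\le\xen{j+1}$, so $\sum_{k=j}^{\ton-1}q_k\le\xen{j+1}+1\le\xen j$ and therefore $d_j=\frac1{\xen j}\sum_{k=j}^{\ton-1}q_k\le1$. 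The second is $\E[\Delta_j^2\mid\xen{}]\le1+\uen j$: since $\Delta_j$ equals $\zen j$ up to a $\cf_{j-1}$-measurable term, $\E[\Delta_j^2\mid\cf_{j-1}]=\Var(\zen j\mid\cf_{j-1})$, and as $\zen{j-1}-\zen j$ is hypergeometric with parameters $\xen{j-1},\zen{j-1},1+\uen j$ its variance is $(1+\uen j)$ times a product of three fractions lying in $[0,1]$, hence at most $1+\uen j$; averaging over $\xen{}$ preserves the bound.

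Combining these through the conditional orthogonality of the $\Delta_j$,
\begin{equation*}
\E\Big[(\tien-\hien)^2\,\Big|\,\xen{}\Big]=\sum_{j=1}^{\ton-1}d_j^2\,\E\big[\Delta_j^2\mid\xen{}\big]\le\sum_{j=1}^{\ton-1}(1+\uen j)=(\ton-1)+\big(n-\xen{\ton-1}\big)\le2n,
\end{equation*}
using $\sum_{j=1}^{\ton-1}\uen j=n-\xen{\ton-1}$ and the deterministic bound $\ton\le n-1$. Taking expectations gives $\E[(\tien-\hien)^2]\le2n$, and Markov's inequality yields $\tien-\hien=O_P(\sqrt n)$.

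The crux is the reorganization via the martingale-difference representation together with the uniform bound $d_j\le1$: this is what produces the sharp order $\sqrt n$ rather than a bound carrying a power-of-logarithm correction, and it rests only on the monotonicity of the block-counting chain, so none of the refined estimates of Proposition~\ref{lem:grolem} are needed here.
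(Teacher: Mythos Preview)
Your proof is correct and follows essentially the same route as the paper: both express $\tien-\hien$ through the centered hypergeometric increments (your $\Delta_j$ are the paper's $-H_j^{(n)}$), use their conditional orthogonality given $\xen{}$ together with the variance bound $\le 1+\uen j$, and reduce the estimate to $d_j\le 1$, yielding $\E[(\tien-\hien)^2\mid\xen{}]\le 2n$. The only cosmetic difference is the justification of $d_j\le 1$: you telescope, whereas the paper simply bounds each product by $1$ and then invokes $\ton-j\le\xen j$.
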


\begin{proof}
We proceed similar as in \cite{KSW13}. Recall that $\zen k$ is the number of external branches after $k$ coalescing events. Since $\yyen k= \xen k-\zen k$, 
\[\tien-\hien = -\sum_{k=1}^{\tau^{(n)}-1}\frac{\zen k-\mathbb E[\zen k|\xen{}]}{\xen k}
\]
Also recall that $\zen k-\zen {k-1}$ has a conditional hypergeometric distribution, given $\xen {}, \zen {k-1}$. Therefore
\[ \zen k= \zen {k-1} - (\uen k+1)\frac{\zen {k-1}}{\xen {k-1}} - H^{(n)}_k = \zen {k-1}\frac{\xen k-1}{\xen{k-1}}- H^{(n)}_k , \]
where $H_{k}^{(n)}$ denotes a random variable with conditional hypergeometric distribution with parameters
$\xen{k-1}$, $\zen{k-1}$ and $1+\uen k$ as above, centered at its (conditional) expectation. For
\[ D_k^{(n)} = \zen k- \mathbb E [\zen k | \xen{}] \]
it follows
\[ D_k^{(n)}=  D_{k-1}^{(n)}\frac{\xen k-1}{\xen{k-1}}- H^{(n)}_k\ . \]
Iterating this linear recursion we obtain because of $D_0^{(n)}=0$
\[   \frac{D_k^{(n)}}{\xen k} = -\sum_{j=1}^k \frac{ H^{(n)}_j}{\xen j} \prod_{i=j+1}^k \Big( 1- \frac 1{\xen i}\Big) \]
and consequently
\[ \tien-\hien =  \sum_{k=1}^{\tau^{(n)}-1} \sum_{j=1}^k \frac{ H^{(n)}_j}{\xen j} \prod_{i=j+1}^k \Big( 1- \frac 1{\xen i}\Big)=  \sum_{j=1}^{\tau^{(n)}-1} \frac{ H^{(n)}_j}{\xen j} \sum_{k=j}^{\tau^{(n)}-1}\prod_{i=j+1}^k \Big( 1- \frac 1{\xen i}\Big) .\]
Now, since the $H_k^{(n)}$ are centered hypergeometric variables, they are uncorrelated, given $\xen {}$. Also from the formula for the variance of a hypergeometric distribution
\[ \mathbb E[(H_j^{(n)})^2 | \xen{}, Z_{j-1}^{(n)}] \le (\uen j+1)\frac{\zen {j-1}}{\xen {j-1}} \]
thus, since $\zen {j-1} \le \xen {j-1}$ a.s.
\[ \mathbb E[(H_j^{(n)})^2 | \xen{}]  \le (\uen j+1)  .\]
Putting everything together we obtain
\[ \mathbb E[ (\hien - \tien)^2 | \xen {}] \le  \sum_{j=1}^{\tau^{(n)}-1} \frac{\uen j+1}{(\xen j)^2}\Big(\sum_{k=j}^{\tau^{(n)}-1}\prod_{i=j+1}^k \Big( 1- \frac 1{\xen i}\Big)\Big)^2.\]
The product can be estimated by 1, thus
\[ \mathbb E[ (\hien - \tien)^2 | \xen {}] \le  \sum_{j=1}^{\tau^{(n)}-1} \frac{\uen j+1}{(\xen j)^2} (\tau^{(n)} - j)^2.\]
By means of $\tau^{(n)} - j \le \xen j$
\[  \mathbb E[ (\hien - \tien)^2 | \xen {}] \le  \sum_{j=1}^{\tau^{(n)}-1} (\uen j+1) \le n+ \tau_n \le 2n . \]
Now an application of Chebychev's inequality gives the claim.
\end{proof}

\section{Application to population genetics}\label{sec:mut}

Let us now suppose that mutations occur along genealogical trees according to a Poisson process of intensity $\mu$. 
We write by  $\men{}$ for  the total number of mutations in the Bolthausen-Sznitman $n$-coalescent. The Poissonian representation implies that, conditionally on $\len$, $\men{}$ is distributed as a Poisson r.v. with parameter $\mu\len$. Mutations can be divided as external and internal according to the type of the branches where they appear and we denote them  by $\men E$ and $\men I$, respectively.

\begin{prop}
 As  $n$ goes to $\infty$,
$$
\frac{(\log n)^2}{n}\men I\to \mu,
$$
in probability and
$$
\frac{(\log n)^2}{n}\men E-\mu\log n-\mu\log\log n\to  \mu(Z-1),
$$
in distribution.
\end{prop}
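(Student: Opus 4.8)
The plan is to exploit the Poissonian structure of the mutations. Conditionally on the genealogical tree $\Pi^{(n)}$ --- equivalently, on the branch lengths, hence on the pair $(\ien,\een)$ --- the counts $\men I$ and $\men E$ are independent Poisson random variables with parameters $\mu\ien$ and $\mu\een$, respectively. Indeed, this is just the fact that the restrictions of a Poisson point process (here the mutation process of intensity $\mu$ on the tree) to the two disjoint measurable sets formed by the internal branches and by the external branches are independent Poisson point processes with the corresponding intensities. In particular $\E[\men I\mid\Pi^{(n)}]=\Var(\men I\mid\Pi^{(n)})=\mu\ien$, and similarly for $\men E$ with $\een$. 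The whole statement will then reduce, writing $a_n:=(\log n)^2/n$, to showing that the Poisson fluctuations $a_n(\men I-\mu\ien)$ and $a_n(\men E-\mu\een)$ vanish in probability, and then invoking Theorem~\ref{th:cvI}, Corollary~\ref{cor:cvE} and Slutsky's theorem.

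For the internal mutations I would first establish $a_n(\men I-\mu\ien)\xrightarrow{\P}0$; combined with $a_n\ien\xrightarrow{\P}1$ from Theorem~\ref{th:cvI}, this gives $a_n\men I\xrightarrow{\P}\mu$. The fluctuation bound comes from a conditional Chebyshev argument: fixing $\kappa>1$ and $\delta>0$, split according to whether $a_n\ien\le\kappa$ or not; on $\{a_n\ien\le\kappa\}$ one has $\Var(\men I\mid\Pi^{(n)})=\mu\ien\le\mu\kappa/a_n$, so $\P(a_n|\men I-\mu\ien|>\delta\mid\Pi^{(n)})\le a_n^2\mu\ien/\delta^2\le \mu\kappa a_n/\delta^2\to 0$, while $\P(a_n\ien>\kappa)\to 0$ again by Theorem~\ref{th:cvI}. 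For the external mutations the same scheme is even more comfortable: Corollary~\ref{cor:cvE} gives $a_n\een=\log n+O_p(\log\log n)$, hence $\een=O_p(n/\log n)$, so the conditional standard deviation of $\men E$ is $O_p(\sqrt{n/\log n})$ and $a_n$ times it is $O_p\big((\log n)^{3/2}/\sqrt n\big)=o_p(1)$; thus $a_n(\men E-\mu\een)\xrightarrow{\P}0$. Writing
\[
a_n\men E-\mu\log n-\mu\log\log n=\mu\big(a_n\een-\log n-\log\log n\big)+a_n\big(\men E-\mu\een\big),
\]
Corollary~\ref{cor:cvE} together with Slutsky's theorem yields convergence in distribution to $\mu(Z-1)$.

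The only delicate point I anticipate is the conditional Chebyshev step, because the conditioning variable ($\ien$ or $\een$) itself appears in the variance estimate: one cannot simply bound unconditional second moments, since no $L^1$ (let alone $L^2$) control of $\ien$ or $\een$ has been established --- only convergence in probability. The remedy, as indicated above, is to truncate on a high-probability event of the form $\{a_n\ien\le\kappa\}$ (respectively $\{a_n\een\le\kappa\log n\}$), apply Chebyshev on that event, and let $\kappa\to\infty$ at the end. Apart from this, and from spelling out why the restriction of the mutation process to internal versus external branches produces independent Poisson counts, every step is routine.
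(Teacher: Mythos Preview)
Your proof is correct. Both your argument and the paper's rest on the conditional Poisson structure of the mutation counts, but the execution differs. For $\men I$ the paper represents $\men I \stackrel{d}{=} N_{\ien}$ for a Poisson process $N$ of rate $\mu$, uses the law of large numbers $N_{\ien}/\ien \to \mu$ in probability (valid since $\ien \to \infty$), and combines with $a_n\ien \to 1$; your conditional Chebyshev with truncation reaches the same conclusion and is more explicit about sidestepping any moment bounds on $\ien$. The more substantive difference is for $\men E$: the paper does not argue directly from Corollary~\ref{cor:cvE} as you do, but instead quotes the known limit law for the \emph{total} mutation count $\men{}$ from \cite{DIMR07} and obtains the external result by subtraction via $\men{}=\men I+\men E$. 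Your route is self-contained and does not require that external reference; the paper's route is a one-line reduction once the result of \cite{DIMR07} is granted.
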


\begin{proof} Let $N=(N_t,t\ge 0)$ be a Poisson process with parameter $\mu$. We first note that conditionally on $\ien$, $M^{(n)}_I$  has the same distribution as $N_{I^{(n)}}$. This implies
 $$
 \E\Big[\men{I}\Big]=\E\Big[\E\Big[\men{I}|\ien\Big]\Big]=\mu\E\Big[\ien\Big]\ .
 $$
Since $\ien\to\infty$ a.s.,  thanks  Theorem \ref{th:cvI}, we deduce that $N_{\ien}/\ien\to\mu$ in probability  and
$$
\frac{\men I}{\E\Big[\men I\Big]}\stackrel{d}{=}
\frac{N_{\ien}}{\mu\ien}\frac{\ien }{\E\Big[\ien \Big]}\xrightarrow[n\to\infty]{\P}1,
$$
thanks to Theorem \ref{th:cvI}.
Therefore, the first result follows from
$ \E[\men{I}]=\mu\E[\ien]\sim \mu n/(\log n)^2$, as $n\to\infty$.

To get the second part of this proposition, we just need to observe that $\men{}=\men I+\men E$ satisfies (see Corollary 6.2 of \cite{DIMR07})
$$
\frac{(\log n)^2}{n}\men{}-\mu\log n-\mu\log\log n
\xrightarrow[n\to\infty]{}\mu Z,
$$
in distribution.
\end{proof}

{\bf Acknowledgement.}
G.K. acknowledges support by the DFG  priority program
SPP-1590 'Probabilistic structures in Evolution.
J.C.P.  acknowledges support by CONACYT,  No. 128896. 


\begin{thebibliography}{99}
\bibitem{BBS07}
{\sc J. Berestycki, N. Berestycki and J. Schweinsberg} ,
Beta-coalescents and continuous stable random trees,
{\em Ann. Probab.} {\bf 35} (2007), no. 5, 1835--1887. MR 2349577 (2009d:60244)

\bibitem{BBS08}
{\sc J. Berestycki, N. Berestycki and J. Schweinsberg} ,
Small-time behavior of Beta-coalescents,
{\em Ann. Inst. H. Poincar\'e Probab. Stat.
 } {\bf 44} (2008), no. 2, 214--238. MR MR2446321

\bibitem{BBS10}
{\sc J. Berestycki, N. Berestycki and J. Schweinsberg},
The genealogy of branching Brownian motion with absorption,
To appear in {\em Ann. Probab.}

\bibitem{BDMM06}
{\sc \'E. Brunet, B.  Derrida, A.H. M\"uller and S. Munier}, Noisy traveling waves: effect of selection on
genealogies,
 {\em Europhys. Lett.} {\bf 76} (2006), no. 1, 1--7. MR MR2299937 (2007m:82070)

\bibitem{BDMM07}
{\sc \'E. Brunet, B.  Derrida, A.H. M\"uller and S. Munier}, Effect of selection on ancestry: an exactly
soluble case and its phenomenological generalization,
 {\em Phys. Rev. E (3)} {\bf 76} (2007), no. 4,
  041104,
20. MR MR2365627 (2008k:82066)

\bibitem{BS98}
{\sc E. Bolthausen and A.-S. Sznitman}, On Ruelle's probability cascades and an abstract cavity method,
{\em Comm. Math. Phys.} {\bf 197} (1998), no. 2,
 247--276. MR 1652734 (99k:60244)

\bibitem{DIMR07}
{\sc M. Drmota, A. Iksanov, M. M\"ohle and U R\"oler}, Asymptotic results concerning the
total branch length of the Bolthausen-Sznitman coalescent, {\em  Stochastic Process. Appl.} {\bf 117} (2007),
no. 10, 1404--1421.

\bibitem{DIMR09}
{\sc M. Drmota, A. Iksanov, M. M\"ohle and U R\"oler}, A limiting distribution for the
number of cuts needed to isolate the root of a random recursive tree,
{\em  Random Structures Algorithms}
{\bf 34} (2009), no. 3, 319--336. MR 2504401 (2010g:60018)

\bibitem{DM12}
{\sc J.-S. Dhersin and M. M\"ohle}, On the external branches of coalescent processes with
multiple collisions with an emphasis on the bolthausen-sznitman coalescent,
Preprint available on {\em
http://arxiv.org/abs/1209.3380}

\bibitem{DY12}
{\sc J.-S. Dhersin and L. Yuan}, Asympotic behavior of the total length of external
branches for beta-coalescents,
Preprint available on {\em
http://arxiv.org/abs/1202.5859}

\bibitem{GdH00}
{\sc J. L. Geluk and L. de Haan}, Stable probability distributions and their domains of attraction: a
direct approach,
{\em  Probab. Math. Statist.} {\bf 20} (2000), no. 1, Acta Univ. Wratislav. No. 2246, 169--188.
MR 1785245 (2001g:60031)

\bibitem{IM07}
{\sc A. Iksanov and M. M\"ohle}, A probabilistic proof of a weak limit law for the number of cuts
needed to isolate the root of a random recursive tree,
{\em  Electron. Comm. Probab.} {\bf 12 } (2007), 28--35.
MR 2407414 (2009e:05076)

\bibitem{JK11}
{\sc S. Janson and G. Kersting}, On the total external length of the Kingman coalescent,
{\em  Electron.
J. Probab.} {\bf 16} (2011), 2203--2218.

\bibitem{KSW13}
{\sc G. Kersting, I. Stanciu and A. Wakolbinger}, The total external branch length of beta-
coalescents.
Preprint available on {\em
http://arxiv.org/abs/1212.6070}

\bibitem{Moh10}
{\sc M. M\"ohle}, Asymptotic results for coalescent processes without proper frequencies and applications
to the two-parameter Poisson-Dirichlet coalescent, {\em Stochastic Process. Appl.} {\bf  120} (2010), no. 11,
2159--2173. MR 2684740

\bibitem{Pit99}
{\sc J. Pitman}, Coalescents with multiple collisions,
{\em  Ann. Probab.} {\bf  27} (1999), no. 4,  1870--1902.
MR 1742892 (2001h:60016)

\bibitem{Sag99}
{\sc S. Sagitov}, The general coalescent with asynchronous mergers of ancestral lines, 
{\em J. Appl.
Probab.} {\bf 36} (1999), no. 4, 1116--1125. MR 1742154 (2001f:92019)
\end{thebibliography}

\end{document}